\tikzset{nodeStyle/.style = {circle,draw,minimum size=30pt}}
\tikzset{arrowStyle/.style = {-latex}}
\newcommand{\Tr}{\mathrm{Tr}}
\newcommand{\tr}{\Tr}
\newcommand{\1}{{\rm 1\hspace{-0.9mm}l}}
\newcommand{\Id}{\1}
\newcommand{\dd}{\mathrm{d}}
\newcommand{\ee}{\mathrm{e}}
\newcommand{\Z}{\mathbb{Z}}
\newcommand{\R}{\mathbb R}
\newcommand{\ie}{\emph{i.e.\/}}
\newtheorem{theorem}{Theorem}
\newtheorem{lemma}[theorem]{Lemma}
\newtheorem{proposition}[theorem]{Proposition}
\newtheorem{remark}[theorem]{Remark}
\newcommand{\ER}{Erd{\H o}s-R\'enyi\xspace}
\newcommand{\WS}{Watts-Strogatz\xspace}
\newcommand{\BA}{Barab{\'a}si-Albert\xspace}
\newcommand{\CL}{Chung-Lu\xspace}
\title{Asymptotic entropy of the Gibbs state of complex networks}
\author{Adam Glos$^{1,2,*}$\and 
Aleksandra Krawiec$^1$ \and {\L}ukasz Pawela$^{1}$}
\address{$^1$Institute of Theoretical and Applied Informatics, 
	Polish Academy of Sciences, ul. Ba{\l}tycka 5, 44-100 Gliwice, Poland}
\address{$^2$Institute of Informatics, Silesian University of Technology, \\
	ul. Akademicka 16, 44-100 Gliwice, Poland}
\begin{document}
\maketitle
$^{*}$ \normalsize\url{aglos@iitis.pl} 

\begin{abstract}
In this work we study the entropy of the Gibbs state corresponding to a graph.
The Gibbs state is obtained from the Laplacian, normalized Laplacian or
adjacency matrices associated with a graph. We calculated the entropy of the
Gibbs state for a few classes of graphs and studied their behavior with changing
graph order and temperature. We illustrate our analytical results with numerical
simulations for \ER, \WS, \BA and \CL graph models and a few real-world graphs.
Our results show that the behavior of Gibbs entropy as a function of the
temperature differs for a choice of real networks when compared to the random
\ER graphs.
\end{abstract}

\section{Introduction} A network represents a relationship among units of a
complex system. The relations are encoded by edges while units are associated
with nodes. Typical random graph models such as \ER{} graphs
\cite{erdHos1960evolution} are usually not suitable for modeling real-world
networks like the Internet \cite{albert2002statistical}. Here complex network
theory comes as a possible remedy. The boundary between a graph and a network is
rather blurred, nevertheless a typical network is scale-free, small-world and
has social structures. Typical examples of complex networks are Watts-Strogatz
\cite{watts1998collective} and Barab\'asi-Albert networks
\cite{albert2002statistical}.

Graph entropy describes the graph in the context of evolution on it
\cite{de2016spectral}. In classical walks one typically considers the von
Neumann entropy calculated for the Laplacian, as Laplacian defines valid
continuous-time stochastic evolution
\cite{kenkre1973generalized,childs2004spatial}. Studies on various types of
graph entropy can be found in the literature~\cite{braunstein2006laplacian}. The
von Neumann entropy for complex networks was analyzed in
\cite{anand2011shannon,nicolini2018thermodynamics}. Thermal state entanglement
entropy on quantum graphs was studied in \cite{verga2019thermal}. Entropy
measure for complex networks using its Gibbs state was defined in
\cite{de2016spectral}.

In contrary to stochastic evolution, continuous-time quantum walks accept
arbitrary symmetric graph matrix which for undirected graphs includes adjacency
matrix and normalized Laplacian
\cite{childs2004spatial,wong2016laplacian,glos2018vertices}. Since it is known
that the choice of a graph matrix does affect the evolution of quantum walk
\cite{wong2016laplacian,glos2018vertices}, we claim that there is a need to
design the entropy formula which accepts each of the above-mentioned matrices.

Entropy in the work \cite{de2016spectral} is defined as the von Neumann entropy 
of Gibbs state of Laplacian matrix
\begin{equation}
S(\varrho_L^\tau) = - \tr \left( \frac{\exp(-\tau L)}{Z} \log  \frac{\exp(-\tau 
L)}{Z} \right),
\end{equation}
where $Z$ is a normalizing constant. Formal introduction of this concept will be
presented in the Preliminaries. Numerical calculations shed light on interesting
behavior of the entropy depending on the parameter $\tau$ of the Gibbs state
interpreted as a parameter proportional to the inverse of temperature \cite{nicolini2018thermodynamics,de2016spectral} or evolution time \cite{nicolini2018thermodynamics,gibbs1902elementary,ghavasieh2020enhancing,de2016spectral}. The authors of
\cite{de2016spectral}  point the phase transition of entropy value for \ER and
\WS graphs for some critical value $\tau_{\rm crit}$. Our analytical
considerations on \ER graphs confirm that such a phase transition actually
occurs, however,  the value of $\tau_{\rm crit}$ depends on the graph order.

Depending on a graph, the phase transitions occurs either for smaller or larger
values of $\tau$. The direction of phase transition change may be derived from
the analysis of entropy limits of graphs with increasing graph order: when the
entropy for fixed $\tau$ grows like $\log(n)$, then clearly the phase transition
moves to the right. On the other hand, when entropy converges to zero, then the phase
transition moves to the left.

For this reason, we calculated the entropy for some special graph classes for
fixed parameter $\tau$ and changing graph order $n$. We made the entropy
analysis for a few types of graph matrices, that is adjacency matrix, Laplacian
and normalized Laplacian. It appeared that the entropy usually takes the form
either $o(1)$ or $\log n- O(1)$, which shows that the phase transition  moves
respectively to the left or right. Furthermore, the deviations from $\log n$
differ for different random graph models, which can give a clue about their
properties. On top of that, we made a numerical analysis for the entropy of a
few real-world graphs analyzing the location and the shape of its phase
transition.

This work is organized as follows. We begin with preliminaries in
Section~\ref{sec:preliminaries}. Then, in Section~\ref{sec_general_theorems} we
present general theorems for entropy behavior basing on properties of the
matrix spectra. The entropy values for specific graph classes are presented in
Section~\ref{sec_specific_graph_classes}. The entropy behavior studied
for various random graph models and real-world graphs is described in
Section~\ref{sec:random_graphs}. Eventually, conclusions can be found in
Section~\ref{sec:conclusions}.

\section{Preliminaries}\label{sec:preliminaries}
We will be interested in studying the von Neumann entropy of Gibbs states
associated with a graph $G$. A graph $G$ is a pair $(V,E)$ where $V$ is a set of
vertices and $E$ is a set of edges. In this work we restrict ourselves to
simple undirected graphs. A graph has three typical matrix representations: the
adjacency matrix, the Laplacian matrix and the normalized Laplacian matrix. The
adjacency matrix of a simple graph is a symmetric square matrix consisting of
ones if two vertices are adjacent and zeros otherwise. The adjacency matrix of a
graph $G$ will be denoted $A(G)$. The degree matrix is a diagonal matrix with
degrees of vertices on the diagonal. The degree matrix will be denoted $D(G)$.
We will often make use of (combinatorial) Laplacian matrix which is defined as
$L(G) \coloneqq D(G)-A(G)$. The normalized Laplacian is defined as
$\mathcal{L}(G) \coloneqq D(G)^{-\frac{1}{2}} L(G) D(G)^{-\frac{1}{2}} = \Id -
D(G)^{-\frac{1}{2}} A(G) D(G)^{-\frac{1}{2}}$. When it will not make confusion
we will be writing only $\mathcal{L}$ instead of $\mathcal{L}(G)$ and
analogously for other graph matrices. 
Eigenvalues of matrices will be denoted $\lambda_{1} , \ldots,
\lambda_n$, where $\lambda_{1} \geq \cdots \geq \lambda_n$. 

In this paper we will use the big-O notation. Class $O(f(n))$ denotes a set of
functions $g$ such that there exist $c>0$ and $n_0\in \Z_{>0}$ s.t. for all
$n\geq n_0$ we have $|g(n)|\leq cf(n)$. We write $f(n)=\Theta(g(n))$ iff
$f(n)=O(g(n))$ and $g(n)=O(f(n))$. Finally, class $o(f(n))$ denotes set of
functions $g$ s.t. $\lim_{n\to\infty} g(n)/f(n)=0$. In particular $O(1)$ denotes
a set of functions upperbounded in absolute value by a constant, and $o(1)$
denotes a set of functions converging to 0.

Now we will introduce the von Neumann entropy of a quantum state $\varrho$. As
$\varrho$ is a density matrix, it is positive and has unit trace, its
eigenvalues form a probability vector. Thus, the von Neumann entropy of the
state $\varrho$ is defined as the standard Shannon entropy of its eigenvalues.
This fact can be succinctly written as
\begin{equation}
S(\varrho) = -\tr (\varrho \log(\varrho))
\end{equation}
where $\log$ refers to the natural logarithm throughout this paper.

For any Hermitian operator $H$ we can define an associated Gibbs state
$\varrho_H^\tau$ as
\begin{equation}
\varrho_H^\tau = \frac{\exp(-\tau H)}{Z},
\end{equation}
where $Z = \Tr(\exp(-\tau H))$ is the partition
function~\cite{ghavasieh2020enhancing}. The parameter $\tau$ can be regarded
either as a parameter proportional to the inverse of the temperature \cite{nicolini2018thermodynamics,de2016spectral} or the
diffusion time~\cite{nicolini2018thermodynamics,gibbs1902elementary,ghavasieh2020enhancing,de2016spectral}. %
Note that the von Neumann entropy of the Gibbs state can be written as
\cite{de2016spectral}
\begin{equation}
S(\varrho_H^\tau)
= \tau \Tr \left(H \varrho_H^\tau \right) + \log Z.
\end{equation}

This entropy has two simple properties summarized in the following lemma, which 
proof is stated in the Supplementary Materials in Section~1.%
\begin{lemma}\label{lm:properties_with_tau}
Let $H$ be a positive semidefinite matrix and $c \in \R$. It holds that
$S\left( \varrho_{cH}^\tau \right)
= S\left( \varrho_H^{c\tau} \right)$
and 
$S\left( \varrho_{c\Id + H}^\tau \right)
= S\left( \varrho_H^{\tau} \right).$
\end{lemma}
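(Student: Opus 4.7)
The plan is to prove both identities by establishing the stronger claim that the Gibbs states themselves (not merely their entropies) coincide. Since the von Neumann entropy is a function of the density matrix only, equality of the states immediately gives equality of the entropies.

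For the first identity, I would simply substitute into the definition: $\exp(-\tau(cH)) = \exp(-(c\tau)H)$ as matrix exponentials, and therefore the partition function $\Tr(\exp(-\tau cH))$ also equals $\Tr(\exp(-(c\tau)H))$. Dividing one by the other gives $\varrho_{cH}^{\tau} = \varrho_{H}^{c\tau}$, so in particular $S(\varrho_{cH}^{\tau}) = S(\varrho_{H}^{c\tau})$.

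For the second identity, the key observation is that the scalar matrix $c\Id$ commutes with $H$, so
\begin{equation}
\exp(-\tau(c\Id + H)) = \exp(-\tau c\,\Id)\exp(-\tau H) = \ee^{-\tau c}\exp(-\tau H).
\end{equation}
Taking traces, $Z_{c\Id+H}^{\tau} = \ee^{-\tau c}\,Z_{H}^{\tau}$, and the common factor $\ee^{-\tau c}$ cancels between numerator and denominator, yielding $\varrho_{c\Id+H}^{\tau} = \varrho_{H}^{\tau}$ and hence equal entropies.

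Neither step presents a genuine obstacle; the only thing to be slightly careful about is noting the commutativity of $c\Id$ with $H$ to justify splitting the exponential in the second identity. The positive-semidefiniteness hypothesis on $H$ is not actually needed for these two identities (they hold for arbitrary Hermitian $H$), so I would either proceed in the stated generality or remark on this briefly.
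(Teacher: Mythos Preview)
Your proposal is correct and essentially matches the paper's own proof: the paper also substitutes $\exp(-\tau(cH))=\exp(-(c\tau)H)$ for the first identity and factors out the scalar $\exp(-\tau c)$ from $\exp(-\tau(c\Id+H))$ for the second, cancelling it between numerator and denominator. The only cosmetic difference is that you phrase the argument as equality of the Gibbs states themselves (hence of their entropies), whereas the paper carries out the same cancellation inside the full expression for $S(\varrho)$; your formulation is arguably cleaner, and your remark that positive semidefiniteness of $H$ is not actually needed is accurate.
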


We will be  writing $S\left( \varrho_H \right)$ instead of $S\left(
\varrho_H^\tau \right)$ when the value $\tau$ does not need to be stated
explicitly. 

When calculating the entropy of a graph given by the adjacency matrix 
we will use the notation  $S(\varrho_{A})$ for $S(\varrho_{-A})$.
When dealing with the Laplacian and normalized Laplacian matrices we will be 
writing  $S(\varrho_{L})$ and $S(\varrho_{\mathcal{L}})$ respectively.

Finally, let us present a simple proposition describing the limit behavior of 
graph entropy.
\begin{proposition}
Assume $G$ is be a connected graph of order $n$. Then, for $M\in \{A,L,\mathcal 
L\}$ we have
\begin{gather}
S(\varrho^0_{M(G)}) = \log(n),\\
\lim_{\tau \to + \infty} S(\varrho^\tau_{M(G)}) = 0.
\end{gather}
\end{proposition}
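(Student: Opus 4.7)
The proposition splits into two independent parts, and the first is essentially a one-line calculation. At $\tau=0$ we have $\exp(-\tau M)=\Id$ and so $Z=\tr(\Id)=n$, which gives $\varrho^0_{M(G)}=\Id/n$. Since the maximally mixed state on $\C^n$ has flat spectrum $(1/n,\ldots,1/n)$, its von Neumann entropy is just the Shannon entropy of the uniform distribution, namely $\log(n)$. The same calculation works verbatim for $M=A$, $M=L$, and $M=\mathcal{L}$, so there is nothing to do separately per matrix.

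For the second part, the natural approach is to diagonalize. Let $\mu_1\le\mu_2\le\cdots\le\mu_n$ be the eigenvalues of the relevant matrix (using $-A$ in the adjacency case, per the convention fixed in the preliminaries). Then $\varrho^\tau_{M(G)}$ shares its eigenbasis with $M$ and has eigenvalues $p_i(\tau)=e^{-\tau\mu_i}/\sum_j e^{-\tau\mu_j}$. Factoring out $e^{-\tau\mu_1}$ from numerator and denominator gives
\begin{equation}
p_i(\tau)=\frac{e^{-\tau(\mu_i-\mu_1)}}{\sum_{j} e^{-\tau(\mu_j-\mu_1)}},
\end{equation}
so provided $\mu_1$ is a simple eigenvalue one has $p_1(\tau)\to 1$ and $p_i(\tau)\to 0$ for $i\ge 2$ as $\tau\to+\infty$. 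Continuity of $x\mapsto -x\log x$ (with the convention $0\log 0=0$) then yields $S(\varrho^\tau_{M(G)})=-\sum_i p_i(\tau)\log p_i(\tau)\to 0$.

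The only nontrivial point is thus simplicity of $\mu_1$, which is precisely where connectedness of $G$ enters. For $L$ and $\mathcal{L}$ this is the standard spectral characterisation of connectedness: the kernel of $L$ is spanned by the all-ones vector $\mathbf{1}$, and the kernel of $\mathcal{L}$ is spanned by $D^{1/2}\mathbf{1}$, both one-dimensional iff $G$ is connected, and both consisting of the smallest (namely zero) eigenvalue since $L$ and $\mathcal{L}$ are positive semidefinite. For $M=-A$, the smallest eigenvalue of $-A$ corresponds to the largest eigenvalue of $A$; the Perron--Frobenius theorem applied to the nonnegative irreducible matrix $A$ of a connected graph guarantees that this top eigenvalue is simple. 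In all three cases $\mu_1$ is therefore simple, and the argument above closes. I do not anticipate any real obstacle; the main thing to be careful about is remembering the sign convention that turns $A$ into $-A$ so that one invokes Perron--Frobenius on the correct side of the spectrum.
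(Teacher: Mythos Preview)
Your proof is correct and follows essentially the same line as the paper's. For $\tau=0$ the computations are identical; for $\tau\to\infty$ the paper uses the thermodynamic identity $S=\tau\,\Tr(M\varrho_M^\tau)+\log Z$ and bounds each term separately, whereas you argue directly via concentration of the Gibbs distribution on the ground state and continuity of $x\mapsto -x\log x$, but both arguments hinge on exactly the same spectral fact---simplicity of the bottom eigenvalue, obtained from connectedness (kernel dimension for $L,\mathcal{L}$, Perron--Frobenius for $A$)---and are equivalent in substance.
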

The proof can be found in the Supplementary Materials in Section~2. 
In fact, the proof 
shows that even for not connected graphs 
the entropy converges to $\log(n)$ as $\tau \to 0$. 
On the other hand, for $\tau \to\infty$ for Laplacian and 
normalized Laplacian the entropy converges to $\log(k)$, where $k$ is the 
number of connected components of $G$. For adjacency matrix the limit for 
non-connected graphs may depend on the form of connected components. Note that 
by the proposition for connected graph the entropy continuously changes from 
$\log(n)$ to zero, when $\tau$ changes from zero to infinity.

\section{General entropy properties}\label{sec_general_theorems}

In this section we will present general theorems concerning the 
entropy's behavior in which we assume only some restrictions on matrix spectra.

Let us begin with a proposition which shows a useful property of $d$-regular
graphs. A $d$-regular graph is a graph whose all vertices have degree equal to
$d$.
For continuous-time quantum walk on $d$-regular graphs the evolution is 
independent on the choice of 
either adjacency matrix or Laplacian \cite{childs2004spatial}.
It follows from the fact that $D=d\1$ and hence it affects only the global 
phase.
For a similar reason, in the case of normalized Laplacian it can be seen as a
change of time.

It turns out that the proposed  entropy reflects this behavior.
\begin{proposition}\label{prop:regular_graph}
Let $G$ be a $d$-regular graph. Then 
$S(\varrho^\tau_{A}) = S(\varrho^\tau_{L})$
and $S(\varrho^\tau_{\mathcal{L}}) = S(\varrho^{\tau/d}_{A})$. 
\end{proposition}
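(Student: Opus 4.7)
The plan is to reduce everything to Lemma~\ref{lm:properties_with_tau} by exploiting the fact that for a $d$-regular graph the degree matrix is $D = d\Id$, so the three graph matrices $A$, $L$, and $\mathcal{L}$ differ from one another only by an additive scalar multiple of the identity and/or an overall rescaling. Once this is observed, the shift and scaling invariances of the entropy finish the argument.

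For the first equality, I would start from $L = D - A = d\Id - A$, which yields $-A = L - d\Id$. By the notational convention $S(\varrho^\tau_{A}) = S(\varrho^\tau_{-A})$, so the left-hand side equals $S(\varrho^\tau_{L - d\Id})$. Since $L$ is positive semidefinite, the shift invariance part of Lemma~\ref{lm:properties_with_tau} (with $c = -d$) immediately gives $S(\varrho^\tau_{L - d\Id}) = S(\varrho^\tau_{L})$, proving part one.

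For the second equality, the key identity is $\mathcal{L} = \Id - \tfrac{1}{d} A = \tfrac{1}{d} L$, since $D^{-1/2} = d^{-1/2}\Id$ and $L = d\Id - A$. Applying the scaling part of Lemma~\ref{lm:properties_with_tau} with $H = \mathcal{L}$ (which is positive semidefinite) and $c = d$ gives $S(\varrho^\tau_L) = S(\varrho^\tau_{d\mathcal{L}}) = S(\varrho^{d\tau}_{\mathcal{L}})$. Replacing $\tau$ by $\tau/d$ yields $S(\varrho^\tau_{\mathcal{L}}) = S(\varrho^{\tau/d}_{L})$, and combining this with the first part completes the chain: $S(\varrho^\tau_{\mathcal{L}}) = S(\varrho^{\tau/d}_{L}) = S(\varrho^{\tau/d}_{A})$.

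There is no real obstacle here; the only thing to keep track of is the sign convention $S(\varrho_A) = S(\varrho_{-A})$ and the fact that the positive semidefiniteness hypothesis in Lemma~\ref{lm:properties_with_tau} is met by applying the lemma to $L$ (or to $\mathcal{L}$) rather than directly to $-A$. Everything else is a direct substitution.
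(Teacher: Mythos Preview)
Your proof is correct and follows essentially the same route as the paper: both arguments use $L = d\Id - A$ together with the shift invariance from Lemma~\ref{lm:properties_with_tau} for the first identity, and $\mathcal{L} = \Id - \tfrac{1}{d}A = \tfrac{1}{d}L$ together with the scaling invariance for the second. The only cosmetic difference is that the paper applies shift-then-scale directly to $\mathcal{L} = \Id - \tfrac{1}{d}A$ to reach $S(\varrho^{\tau/d}_{A})$, whereas you pass through $S(\varrho^{\tau/d}_{L})$ and invoke the first part; the content is identical.
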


\begin{proof}
Let $G$ be a $d$-regular graph. 
Then Laplace matrix of $G$ is 
$L = d \Id - A$, where $A$ is the adjacency matrix of $G$. 
Now from Lemma \ref{lm:properties_with_tau} we have that 
$S(\varrho^\tau_{L})= S(\varrho^\tau_{d \Id - A}) = S(\varrho^\tau_{A})$.

The normalized Laplacian for the $d$-regular graph takes the form 
$\mathcal{L} = \1 -\frac{1}{d}A$. Therefore again from Lemma 
\ref{lm:properties_with_tau} we have 
\begin{equation}
S(\varrho^\tau_\mathcal{L})=S(\varrho^\tau_{\1 -\frac{1}{d}A}) = 
S(\varrho^\tau_{-\frac{1}{d}A}) = S(\varrho^{\tau/d}_{A}).
\end{equation}
\end{proof}

It turns out that for the normalized Laplacian the entropy may take the values 
only from the very small interval.
Let us first present a result for general Hermitian matrices with bounded 
spectra.
Its proof can be found in the Supplementary Materials in Section~3.1. %

\begin{lemma} \label{lm:finite-spectrum}
Let $ H$ be a matrix with eigenvalues bounded by $c_1 \geq \lambda_i 
\geq c_2$.  Let $\tau>0$ be a constant. Then
\begin{itemize}
\item if $c_1,c_2\leq 1/\tau$, then 
\begin{equation}
 \log n - S(\varrho_H)  \leq \tau (c_1 - c_2),
\end{equation}
\item if $c_2\leq 1/\tau \leq c_1$, then 
\begin{equation}
\log n - S(\varrho_H) \leq 		
\tau \left(c_1- \min \{ c_1 \exp (\tau (c_2 - c_1)) , c_2 \}  \right),
\end{equation}
\item if $c_1,c_2\geq 1/\tau$, then 
\begin{equation}
\log n - S(\varrho_H) \leq  \tau c_1 \left( 1- \exp \left(\tau (c_2 - 
c_1)\right)  \right).
\end{equation}
\end{itemize}	
\end{lemma}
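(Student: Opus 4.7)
The approach is to start from the identity $S(\varrho_H^\tau)=\tau\Tr(H\varrho_H^\tau)+\log Z$ recorded in the Preliminaries, so that, writing $\langle H\rangle\coloneqq \Tr(H\varrho_H^\tau)=\tfrac{1}{Z}\sum_{i=1}^{n}\lambda_i\,e^{-\tau\lambda_i}$, one has
\begin{equation*}
\log n - S(\varrho_H) \;=\; \bigl(\log n - \log Z\bigr) \;-\; \tau\langle H\rangle.
\end{equation*}
The proof therefore splits into an upper bound on $\log n - \log Z$ and a case-dependent lower bound on $\langle H\rangle$.

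For the first piece, the estimate $\lambda_i\leq c_1$ yields $Z\geq ne^{-\tau c_1}$, hence $\log n - \log Z\leq \tau c_1$ in all three regimes; this is the origin of the common $\tau c_1$ term appearing in every part of the statement. The dual estimate $Z\leq ne^{-\tau c_2}$ coming from $\lambda_i\geq c_2$ is what will feed into the bound on $\langle H\rangle$.

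The heart of the argument is the case analysis for $\langle H\rangle$, which I would control through the auxiliary scalar function $f(x)=xe^{-\tau x}$. Since $f'(x)=(1-\tau x)e^{-\tau x}$, the function is increasing on $(-\infty,1/\tau)$ and decreasing on $(1/\tau,\infty)$, which is precisely why the statement splits at $1/\tau$. If $c_1,c_2\leq 1/\tau$ then $f$ is increasing on $[c_2,c_1]$, so $f(\lambda_i)\geq c_2 e^{-\tau c_2}$ for every $i$; summing and dividing by $Z\leq ne^{-\tau c_2}$ gives $\langle H\rangle\geq c_2$ (and the same bound holds trivially when $c_2<0$ because $\langle H\rangle$ is a convex combination of the $\lambda_i$). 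If $c_1,c_2\geq 1/\tau$ then $f$ is decreasing on $[c_2,c_1]$ and the analogous step delivers $\langle H\rangle\geq c_1\exp(\tau(c_2-c_1))$. In the intermediate regime the minimum of $f$ over $[c_2,c_1]$ is attained at one of the two endpoints, so $f(\lambda_i)\geq\min\{c_2 e^{-\tau c_2},\,c_1 e^{-\tau c_1}\}$; division by $Z\leq ne^{-\tau c_2}$ factors the $e^{-\tau c_2}$ out of the minimum and produces $\langle H\rangle\geq\min\{c_2,\,c_1\exp(\tau(c_2-c_1))\}$. Substituting each of these three lower bounds into $\log n - S(\varrho_H)\leq \tau c_1-\tau\langle H\rangle$ yields the three asserted inequalities.

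The main obstacle is simply spotting that the three cases of the lemma correspond exactly to the sign pattern of $f'(x)=(1-\tau x)e^{-\tau x}$ on the interval $[c_2,c_1]$; once the unimodal behaviour of $f$ around $x=1/\tau$ is in hand, the remaining work is elementary. A minor technical nuisance is that $c_2$ may be negative (for instance when $H=-A$), in which case the convexity bound $\langle H\rangle\geq c_2$ has to step in for the monotonicity bound, but this does not affect the final form of any of the three estimates.
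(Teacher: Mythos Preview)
Your argument is correct and coincides with the paper's own proof: both start from $S(\varrho_H)=\tau\langle H\rangle+\log Z$, bound $\log Z$ via $ne^{-\tau c_1}\le Z\le ne^{-\tau c_2}$, and control $\langle H\rangle$ through the unimodality of $f(x)=xe^{-\tau x}$ with its peak at $x=1/\tau$, splitting into the same three cases. Your remark on the sign of $c_2$ (falling back on the convex-combination bound $\langle H\rangle\ge c_2$) is a small refinement over the paper, which glosses over this point, but the route is otherwise identical.
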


Conclusion directly drawn from the above Lemma is stated as a theorem 
concerning the entropy of a sequence of positive semidefinite matrices with 
finite spectral norm.

\begin{theorem}\label{theorem:finite-spectrum}
Suppose $(H_n)$ is a sequence of positive semidefinite matrices $n\times n$
with spectral norm
bounded by some constant independent of $n$. Then for fixed $\tau$ it holds that
$S(\varrho_{H_n}^\tau)=\log n- O(1)$.
\end{theorem}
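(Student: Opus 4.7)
The plan is to derive Theorem \ref{theorem:finite-spectrum} as a more or less direct corollary of Lemma \ref{lm:finite-spectrum}. Let $c>0$ be the uniform spectral-norm bound so that every eigenvalue $\lambda_i$ of $H_n$ lies in $[0,c]$; such a $c$ exists by hypothesis together with positive semidefiniteness. I will apply the lemma with $c_1 = c$ and $c_2 = 0$, noting that these constants are independent of $n$.

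First I would split on the size of $\tau c$ relative to $1$ in order to pick the right case of the lemma. If $\tau c \leq 1$ we are in the first case and immediately get $\log n - S(\varrho_{H_n}^\tau) \leq \tau (c_1 - c_2) = \tau c$. If $\tau c > 1$ then $c_2 = 0 \leq 1/\tau \leq c_1$, so the second case applies and yields
\begin{equation}
\log n - S(\varrho_{H_n}^\tau) \leq \tau \bigl(c - \min\{c\,\ee^{-\tau c},\,0\}\bigr) = \tau c,
\end{equation}
since $c\,\ee^{-\tau c} \geq 0 = c_2$. In either situation the upper bound is the same constant $\tau c$, which is independent of $n$ for fixed $\tau$.

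For the matching lower bound on $\log n - S(\varrho_{H_n}^\tau)$, I would invoke the standard fact that the von Neumann entropy of an $n$-dimensional state is at most $\log n$, attained by the maximally mixed state. This gives $\log n - S(\varrho_{H_n}^\tau) \geq 0$. Combining the two bounds shows that $\log n - S(\varrho_{H_n}^\tau)$ is a non-negative quantity bounded above by $\tau c$, hence $S(\varrho_{H_n}^\tau) = \log n - O(1)$, exactly as claimed.

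There is no real obstacle here: the only mildly delicate point is checking that the second case of the lemma collapses to the same constant bound $\tau c$ as the first, so that we can state the conclusion uniformly in $\tau$ and $n$ without having to track whether $\tau$ is small or large relative to $1/c$. Everything else is a mechanical application of the lemma together with the maximum-entropy bound, and the case $\tau c \geq 1$ with $c_2 > 0$ (the third case of the lemma) never arises since positive semidefiniteness lets us always take $c_2 = 0$.
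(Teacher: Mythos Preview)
Your proof is correct and follows exactly the route the paper intends: the theorem is stated as a direct consequence of Lemma~\ref{lm:finite-spectrum}, and you have simply spelled out the case analysis with $c_1=c$, $c_2=0$ to obtain the uniform bound $\log n - S(\varrho_{H_n}^\tau)\le \tau c$, which the paper also notes explicitly immediately after the theorem. The only addition you make is the trivial lower bound $S\le \log n$ to confirm the $O(1)$ term is nonnegative, which is appropriate given the paper's definition of $O(\cdot)$ in terms of absolute value.
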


For normalized Laplacian we have 
$c_2=0$ and $c_1=\|\mathcal{L}\|\leq 2$ 
\cite{chung1997spectral}, which give us the situation as in Lemma 
\ref{lm:finite-spectrum}.
More specifically, independently on $\| \mathcal{L}\|$ and $\tau$ the bound 
yields 
\begin{equation}
\log n - S(\varrho_{H}) \leq \tau \| \mathcal{L}\|.
\end{equation}
The bound cannot be improved to $\log n-o(1)$ for general normalized Laplacians
sequence of increasing size. In fact, we will show that the deviations from 
$\log(n)$ occur not only for simple graphs like cycle, but also for all 
complex graphs considered in this paper, see Sec.~\ref{sec:random_graphs}.

Note that for Laplacian matrices of graphs with maximal degree $\Delta$ we have 
$\Delta \leq \|L\| \leq 2\Delta$ \cite{anderson1985eigenvalues}. Furthermore, for arbitrary graph we have 
$c_2=0$ for the Laplacian. Hence if a graph has a bounded degree, then 
we can simply utilize Theorem \ref{theorem:finite-spectrum} in this 
scenario.

While considering Laplacian matrices we need to assume that a matrix is
singular. More specifically, the number of zero eigenvalues is equal to the
number of connected components of the graph. We will
focus on the case when one of the eigenvalues is equal to zero and the rest
of the eigenvalues are strictly positive (\ie \ the graph is connected). In the 
next theorem we restrict
ourselves to the case when all the nonzero eigenvalues converge to a positive
constant.

\begin{theorem} \label{theorem:normalized-laplacian}
Let $H$ be a singular nonnegative matrix of size $n$ with single 
zero-eigenvalue and let $\tau>0$ be a constant. Assume that 
$\lambda_1\to c$ and $\lambda_{n-1}\to c$ for some constant $c$ as $n 
\rightarrow \infty$. 
Then  $S(\varrho_{H})=\log n -o(1).$
\end{theorem}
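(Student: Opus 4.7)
The plan is to work directly with the spectral decomposition. Let $\lambda_n = 0$ be the unique zero eigenvalue and write the nonzero eigenvalues as $\lambda_i = c + \varepsilon_i$ for $i=1,\dots,n-1$. Since $\lambda_{n-1} \leq \lambda_i \leq \lambda_1$ and both $\lambda_1, \lambda_{n-1} \to c$, the perturbations are uniformly small: $\max_i |\varepsilon_i| = o(1)$. The Gibbs probabilities are $p_i = e^{-\tau\lambda_i}/Z$, and I will use the convenient formula $S(\varrho_H^\tau) = \tau\,\Tr(H\varrho_H^\tau) + \log Z$ established in the preliminaries.

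First I would estimate the partition function. Writing
\begin{equation}
Z = 1 + \sum_{i=1}^{n-1} e^{-\tau(c+\varepsilon_i)} = 1 + (n-1)e^{-\tau c}\bigl(1+o(1)\bigr),
\end{equation}
using $e^{-\tau\varepsilon_i} = 1 + o(1)$ uniformly in $i$. Since $(n-1)e^{-\tau c} \to \infty$, the constant $1$ is absorbed and $Z = (n-1)e^{-\tau c}(1+o(1))$, hence $\log Z = \log n - \tau c + o(1)$.

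Next I would handle the energy term. Because $\lambda_n=0$ contributes nothing,
\begin{equation}
\Tr(H\varrho_H^\tau) = \frac{1}{Z}\sum_{i=1}^{n-1}\lambda_i e^{-\tau\lambda_i} = \frac{(c+o(1))(n-1)e^{-\tau c}(1+o(1))}{Z},
\end{equation}
and the ratio $(n-1)e^{-\tau c}/Z = 1 - 1/Z \to 1$, so $\tau\,\Tr(H\varrho_H^\tau) \to \tau c$. Adding the two contributions yields
\begin{equation}
S(\varrho_H^\tau) = \tau c + \log n - \tau c + o(1) = \log n - o(1),
\end{equation}
which is the claim.

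The argument is essentially a routine asymptotic expansion, so there is no single hard step. The only place that requires care is the interplay between the isolated zero eigenvalue and the tightly clustered nonzero ones: one must check that the zero eigenvalue's contribution $p_n = 1/Z$ vanishes (it does, since $Z\to\infty$), so the Shannon entropy is dominated by the nearly uniform distribution on the remaining $n-1$ outcomes, each with probability $\approx 1/(n-1)$. Presenting the bound through the $\tau\Tr(H\varrho)+\log Z$ formula rather than directly through $-\sum p_i\log p_i$ avoids having to manipulate individual $p_i\log p_i$ terms and keeps the estimates clean.
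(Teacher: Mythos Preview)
Your proof is correct and follows essentially the same route as the paper: both use the decomposition $S(\varrho_H^\tau)=\tau\,\Tr(H\varrho_H^\tau)+\log Z$, then exploit that every nonzero eigenvalue is squeezed between $\lambda_{n-1}$ and $\lambda_1$, both tending to $c$, to show $\log Z=\log n-\tau c+o(1)$ and $\tau\,\Tr(H\varrho_H^\tau)\to\tau c$. The only cosmetic difference is that the paper writes out explicit sandwich inequalities (bounding numerator and denominator separately by their extremal values at $\lambda_1$ and $\lambda_{n-1}$), whereas you package the same squeeze as ``$\lambda_i=c+\varepsilon_i$ with $\max_i|\varepsilon_i|=o(1)$''; the two presentations are equivalent.
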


The proof of the above theorem can be found in the Supplementary Materials in  
Section~3.2.

Now we focus on the case when the spectrum can be unbounded. 
An example of such a matrix is the Laplacian matrix. 
While it is singular and positive semidefinite, its 
norm coincides with the maximum degree of the graph, hence it can be unbounded. 
In the following theorem, proven in the Supplementary Materials in  
Section~3.3,  we make an assumption only on the 
behavior of the smallest nonzero eigenvalue.

\begin{theorem}\label{theorem:laplacian-like-entropy1}
Let $H_n$ be a singular nonnegative matrix of size $n$ with single
zero-eigenvalue  and let $\tau>0$ be a constant. Assume $\lambda_{n-1}(H_n)
\gg\log n$. Then $S\left (\varrho_{H_n} \right)=o(1)$.
\end{theorem}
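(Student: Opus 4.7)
The plan is to exploit the fact that under the hypothesis $\lambda_{n-1}(H_n) \gg \log n$, the Gibbs state becomes overwhelmingly concentrated on the zero eigenvector, so its eigenvalue distribution is nearly a point mass on $\lambda_n = 0$ and its Shannon entropy collapses. Concretely, I would label the eigenvalues $0 = \lambda_n < \lambda_{n-1} \leq \cdots \leq \lambda_1$ and work with the probabilities $p_i = e^{-\tau\lambda_i}/Z$, where $Z = 1 + \sum_{i=1}^{n-1} e^{-\tau\lambda_i}$.

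First I would control $Z$. Since $\lambda_i \geq \lambda_{n-1}$ for $i \leq n-1$, we have $\sum_{i=1}^{n-1} e^{-\tau\lambda_i} \leq n e^{-\tau\lambda_{n-1}}$, and $n e^{-\tau\lambda_{n-1}} = e^{\log n - \tau\lambda_{n-1}} \to 0$ by the assumption $\tau\lambda_{n-1}/\log n \to \infty$. Hence $Z = 1 + o(1)$, $p_n \to 1$, and $\log Z = o(1)$. This immediately handles the contribution of the zero eigenvalue, since $-p_n \log p_n = Z^{-1}\log Z = o(1)$.

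Next I would use the Gibbs-state identity $S(\varrho_H) = \tau\Tr(H \varrho_H) + \log Z$ (already recalled in the Preliminaries). Equivalently, expanding
\begin{equation}
-\sum_{i=1}^{n-1} p_i \log p_i = \tau \sum_{i=1}^{n-1} \lambda_i p_i + (\log Z)\sum_{i=1}^{n-1} p_i,
\end{equation}
the last term is $o(1) \cdot (1-p_n) = o(1)$, so everything reduces to showing $\tau \sum_{i=1}^{n-1} \lambda_i e^{-\tau\lambda_i}/Z = o(1)$. For this I would use the fact that the function $x \mapsto x e^{-\tau x}$ is decreasing on $x > 1/\tau$, and that $\lambda_{n-1} \gg \log n$ eventually forces $\lambda_{n-1} > 1/\tau$; thus $\lambda_i e^{-\tau\lambda_i} \leq \lambda_{n-1} e^{-\tau\lambda_{n-1}}$ for every $i \leq n-1$. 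This bounds the sum by $\tau n \lambda_{n-1} e^{-\tau\lambda_{n-1}}/Z = \exp(\log n + \log\lambda_{n-1} - \tau\lambda_{n-1} + O(1))$, whose exponent tends to $-\infty$ under $\lambda_{n-1} \gg \log n$, since both $\log n$ and $\log \lambda_{n-1}$ are dominated by $\tau\lambda_{n-1}$.

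The main (very mild) obstacle is the bookkeeping for the term $\tau\sum \lambda_i e^{-\tau\lambda_i}$: one has to ensure that the extra polynomial factor $\lambda_{n-1}$ does not spoil the exponential decay. This is resolved cleanly by noting $\log\lambda_{n-1} = o(\tau\lambda_{n-1})$, which follows automatically from $\lambda_{n-1} \to \infty$ (guaranteed by $\lambda_{n-1} \gg \log n$) and $\tau$ being a positive constant. Combining the three estimates gives $S(\varrho_{H_n}) = o(1)$, as desired.
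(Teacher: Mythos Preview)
Your proof is correct and follows essentially the same route as the paper: decompose $S(\varrho_{H_n}) = \tau\Tr(H_n\varrho_{H_n}) + \log Z$, bound $\log Z$ via $Z \leq 1 + (n-1)e^{-\tau\lambda_{n-1}} = 1+o(1)$, and bound the energy term using the monotonicity of $x\mapsto xe^{-\tau x}$ for $x>1/\tau$ together with $\lambda_{n-1}\gg\log n$ to kill the factor $n\lambda_{n-1}$. The paper's argument is identical up to cosmetic differences (it writes $\lambda_{n-1}=g(n)\log n$ with $g(n)\to\infty$ instead of tracking the exponent directly, and uses $\log Z\leq Z-1$ rather than $Z=1+o(1)$); your separate treatment of $-p_n\log p_n$ is redundant once you invoke the Gibbs identity, but harmless.
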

We use the notation $f(x) \gg g(x)$ when 
$\lim\limits_{x \rightarrow \infty}f(x)/ g(x) =\infty$.

The Laplacian matrix of a connected graph does not necessarily satisfy the
assumption on $\lambda_{n-1}$ mentioned in
Theorem~\ref{theorem:laplacian-like-entropy1}, hence the result cannot be
generalized into `arbitrary sequence of Laplacians', even connected. As an
example, the cycle graph $C_n$ of size $n$ is known to have eigenvalues
$2-2\cos(\frac{2\pi j}{n})$ for $j=0,\dots,n-1$ \cite{brouwer2011spectra}. 
Hence the spectrum is bounded and we can apply 
Theorem~\ref{theorem:finite-spectrum}.
By this we have $S(\varrho_{L(C_n)}) = \log n -O(1)$. Such behavior shows the 
difference between Laplacian and normalized Laplacian in the sense of von 
Neumann entropy of the Gibbs state. 

\section{Entropy of specific graph classes}\label{sec_specific_graph_classes}

In this section we study the entropy of a few selected classes of graphs. 
The entropy is calculated for three types of graph matrices: adjacency matrix 
$A$, Laplacian matrix $L$ and normalized Laplacian $\mathcal{L}$. 
Four types of graphs were taken into consideration: empty graph, complete 
graph, bipartite graphs and cycle graph.
An empty graph of order $n$ is denoted by $E_n$.
The symbol $K_n$ denotes the complete graph. 
A bipartite graph is a graph whose vertices are partitioned into two
disjoint sets, $V$ and $W$,
and any two vertices from the same set cannot be adjacent.
When a vertex $v \in V$ is adjacent to all vertices
from the set $W$ and vice-versa, then the graph is called a complete bipartite graph.
Such a complete bipartite graph, where $|V|=n_1$ and $|W|=n_2$, is denoted by 
$K_{n_1,n_2}$.
Finally, the symbol $C_n$ is used to denote a cycle graph.

\begin{table}[h!]
\begin{center}
\begin{tabular}{ c|c c c } 
& adjacency & \multirow{2}*{Laplacian} & normalized \\
& matrix & & Laplacian \\
 \hline 
 $E_n$ & $\log n-o(1)$ & $\log n-o(1)$ & --  \\ 
 $K_n$  & $o(1)$ & $o(1)$ & $\log n-o(1)$ \\[1ex]
 $K_{n_1,n_2}$ & $o(1)$ & \multicolumn{2}{c}{depends on $n_1,n_2$}  \\
 $K_{n_1,n_1}$ & $o(1)$ & $o(1)$ & $\log n-o(1)$  \\ 
 $K_{n_1,1}$ & $o(1)$ & $\log n-o(1)$ & $\log n-o(1)$ \\[1ex] 
  $C_n$ & $\log n-\Theta(1)$ & $\log n-\Theta(1)$ & $\log n-\Theta(1)$\\
\end{tabular}
\end{center}
\caption{\label{table:entropy_of_graph_classes} Asymptotic behavior of the 
entropy calculated for various graph classes described in 
Sec.~\ref{sec_specific_graph_classes} 
}
\end{table}

All the results are presented in Table \ref{table:entropy_of_graph_classes}.
The proofs can be found in the Supplementary Materials in  Section~4. 
An interesting observation is that in the first three cases the entropy behaves 
either like $\log n$ or converges to zero. 
For a cycle graph however the result is neither of them.
More specifically, the entropy calculated for both 
adjacency and Laplacian matrices behaves in the same way
\begin{equation}\label{eq:entropy_cycle_adj}
S(\varrho_{A(C_n)}) = S(\varrho_{L(C_n)})= \log n - 2 \tau \frac{I_1 
(2\tau)}{I_0 (2 
\tau)} +\log\left(I_0 (2\tau)\right) + o(1),
\end{equation}
where $I_\alpha(x)$ is the modified Bessel function of the first kind.
For the normalized Laplacian of a cycle we obtain
\begin{equation}\label{eq:entropy_cycle_normalized_lapl}
S(\varrho_{\mathcal{L}(C_n)}) = \log n -  \tau \frac{I_1 
(\tau)}{I_0 ( \tau)} +\log\left(I_0 (\tau)\right) + o(1).
\end{equation}

It is also worth noting that the entropies calculated for adjacency matrix and 
Laplacian usually have the same asymptotic properties, that is either $\log 
n-o(1)$ or $o(1)$. 
Nevertheless, we found an counterexample which is a star graph $K_{n_1,1}$ for 
which the entropy for adjacency matrix is substantially different than the 
entropy for Laplacian.

\section{Random graphs}\label{sec:random_graphs}
In this section we consider various random graph models. Let us begin with \ER
random graphs \cite{erdHos1960evolution}. The symbol $G(n,p)$ is used to denote
a random graph of order $n$ where the probability that any two vertices are
adjacent equals $p$. A generalization of the \ER graph model is the \CL graph
model \cite{chung2004spectra,chung2011spectra} in which we obtain a graph with a
specified expected degree sequence $(w_1, \ldots, w_n)$. The probability that
vertices $v_i$ and $v_j$ are adjacent equals $w_i w_j/\sum_k w_k$.

\WS random graphs \cite{watts1998collective} are constructed as follows. In the
first step we have a regular ring lattice, that is a graph of order $n$ where
each vertex is adjacent to $K$ neighbors ($K/2$ on each side). Then, for each
vertex we consider their neighbors from one side and rewire them with
probability $\beta$ to some other vertex. \WS graphs are known to be
small-world, meaning that in contrary to \ER graphs all vertices are close to
each other. Nevertheless, the degree distribution is highly concentrated around
$K$.

\BA random graphs \cite{albert2002statistical} are constructed as follows. We begin with 
a complete graph with fixed order $m_0$.  
Then we add vertices one after another. 
Each time, a new vertex is adjacent to $m$ of the already existing 
vertices.
The probability that the new vertex is adjacent to the already-existing vertex 
$v$ is proportional to the degree of the vertex $v$. 

We will start with analytical results for 
\ER and \CL graphs for Laplacian and  normalized Laplacian matrices.
Then, we will present numerical results for other types of graph matrices and 
other graph models presented above.

\subsection{\ER graphs}

The Laplacian matrix of a random \ER graph with 
$p \gg\log(n)/n$ almost surely has a single outlying zero eigenvalue 
and  the rest of eigenvalues  behaving like $np(1+o(1))$. 
A useful property of the second smallest eigenvalue is formulated as a theorem.
\begin{theorem}[\cite{kolokolnikov_algebraic_2014}]
The second smallest eigenvalue $\lambda_{n-1}$ of the random Laplacian 
matrix $L$ from Erd\H{o}s-R\'enyi graph $G(n,p)$ with $p \gg\log 
(n)/n$ satisfies a.a.s.
\begin{equation}
\lambda_{n-1}=np +O(\sqrt{np\log n}).
\end{equation}
\end{theorem}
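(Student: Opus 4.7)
The strategy is to compare the random Laplacian to its expectation via Weyl's inequality, which reduces the problem to a spectral-norm concentration bound on $L - \E[L]$. First I would compute $\E[L]$ directly from the model: since $\E[A_{ij}] = p$ for $i \neq j$ and $\E[D_{ii}] = (n-1)p$, one obtains
\[
\E[L] = (n-1)p\,\1 - p(J-\1) = np\,\1 - pJ,
\]
where $J$ is the all-ones matrix. This matrix annihilates the all-ones vector $\mathbf{1}$ and acts as $np\,\1$ on $\mathbf{1}^\perp$, so its ordered spectrum is $(np,\dots,np,0)$ and in particular $\lambda_{n-1}(\E[L]) = np$.

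Next I would decompose the fluctuation as $L - \E[L] = (D - \E[D]) - (A - \E[A])$ and control each piece separately. The diagonal piece $D - \E[D]$ has $i$-th entry equal to a centered sum of $n-1$ independent $\mathrm{Bernoulli}(p)$ random variables; a Chernoff/Bernstein tail bound together with a union bound over the $n$ diagonal entries yields $\|D - \E[D]\| = \max_i |D_{ii} - (n-1)p| = O(\sqrt{np\log n})$ a.a.s., using exactly the hypothesis $p \gg \log n/n$. The off-diagonal piece $A - \E[A]$ is a symmetric matrix with independent mean-zero entries of variance $p(1-p)$; standard random matrix concentration results on sparse symmetric matrices (F\"uredi--Koml\'os / Vu) give $\|A - \E[A]\| = O(\sqrt{np})$ a.a.s.\ in the same regime. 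Combining the two bounds via the triangle inequality,
\[
\|L - \E[L]\| = O(\sqrt{np\log n}) \quad \text{a.a.s.}
\]

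Finally, Weyl's inequality applied to the Hermitian pair $L,\E[L]$ gives $|\lambda_{n-1}(L) - \lambda_{n-1}(\E[L])| \leq \|L - \E[L]\|$, and substituting the bound above yields $\lambda_{n-1}(L) = np + O(\sqrt{np\log n})$ a.a.s. The main obstacle is the spectral-norm bound on the centered adjacency matrix: it is the only step that requires non-elementary input and must be imported from random matrix theory, whereas the diagonal concentration reduces to a scalar Chernoff argument and the Weyl step is immediate. Note that it is actually the diagonal piece, not the off-diagonal piece, that is responsible for the $\sqrt{\log n}$ factor in the final bound; the off-diagonal contribution is strictly smaller in the regime $p \gg \log n/n$.
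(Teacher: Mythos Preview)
The paper does not actually prove this statement: it is quoted verbatim as a result of Kolokolnikov \etal\ \cite{kolokolnikov_algebraic_2014} and used as a black box to feed into Theorem~\ref{theorem:laplacian-like-entropy1} and Remark~\ref{remark:erdos_renyi_dense}. So there is no ``paper's own proof'' to compare against.

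That said, your argument is correct and is essentially the standard route to this kind of bound. The computation $\E[L]=np\,\1-pJ$ with spectrum $(np,\dots,np,0)$ is right; the Chernoff-plus-union-bound control of $\|D-\E[D]\|$ at level $O(\sqrt{np\log n})$ is exactly where the hypothesis $p\gg\log(n)/n$ enters; and the Weyl step is immediate. The only place where you are leaning on nontrivial external input is the bound $\|A-\E[A]\|=O(\sqrt{np})$ in the sparse regime $p=o(1)$: the classical F\"uredi--Koml\'os argument is really a dense-matrix statement, so here one should cite a sparse version (Feige--Ofek, Vu, or the more recent Lei--Rinaldo/Bandeira--van~Handel type results) that covers $p\gg\log(n)/n$. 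With that caveat the sketch is complete, and your closing remark that the $\sqrt{\log n}$ comes from the degree fluctuations rather than the adjacency fluctuations is also correct and worth keeping.
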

Moreover, from \cite{glos2018vertices} we have that 
$\lambda_1 \sim np$ for $p\gg\log(n)/n$.
The next remark follows from Theorem \ref{theorem:laplacian-like-entropy1}.
\begin{remark}\label{remark:erdos_renyi_dense}
The von Neumann entropy of Gibbs state of Laplacian of random 
\ER graph $G(n,p)$ with $p \gg\log(n)/n$ converges 
a.a.s. to zero.
\end{remark}
The main reason of such behavior is the strongly outlying 0 value. The 
behavior changes when $p=\Theta(\log(n)/n)$. For $p<(1-\varepsilon)\log (n)/n$ 
the graph is almost surely disconnected \cite{erdHos1960evolution}, and since 
the dimensionality of the null-space of the Laplacian equals the number of 
connected components \cite{brouwer2011spectra}, the graph entropy strongly 
depends on $n$.  

Let us now consider the threshold behavior of \ER model when
$p=p_0\frac{\log n}{n}$ with $p_0>1$. Here we have $\lambda_{n-1}\sim
(1-p_0)W_{-1}^{-1}\left(\frac{1-p_0}{\ee p_0}\right)\log n$ 
\cite{kolokolnikov_algebraic_2014} and
$\lambda_1\sim (1-p_0)W_{0}^{-1}\left(\frac{1-p_0}{\ee p_0}\right)\log n$
\cite{glos2018vertices}, where $W_{-1},W_0$ are Lambert $W$ functions. In this
case the following theorem provides results for selected values of $\tau$. Its
proof can be found in the Supplementary Materials in  Section~3.4.%

\begin{theorem}\label{th:th7}
Let $H_n$ be a positive semidefinite matrix with a single zero-eigenvalue of 
size $n$ and $\tau>0$ be a constant. Assume $\lambda_{n-1}=a\log n$ and 
$\lambda_1=b\log n$ for $a,b>0$. Then the behavior of the von Neumann 
entropy satisfies
\begin{enumerate}
\item if $\tau <\frac{1}{b}$, then $S(\varrho_{H_n})\geq (1-\tau b)\log n 
+o(1)$,
\item if $\tau =\frac{1}{b}$, then $S(\varrho_{H_n})\geq \log 2 +o(1)$,
\item if $\tau >\frac{1}{a}$, then $S(\varrho_{H_n})=o(1)$.
\end{enumerate}
\end{theorem}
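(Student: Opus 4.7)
The plan is to work from the identity $S(\varrho_H^\tau)=\tau\Tr(H\varrho_H^\tau)+\log Z$ recalled in the Preliminaries, together with the split of the partition function
\begin{equation*}
Z \;=\; 1 \;+\; \sum_{i=1}^{n-1} \exp(-\tau \lambda_i)
\end{equation*}
induced by the single zero eigenvalue. Under the hypotheses $\lambda_{n-1}=a\log n$ and $\lambda_1=b\log n$, all positive eigenvalues lie in $[a\log n,\,b\log n]$, so $Z$ is sandwiched between $1+(n-1)n^{-\tau b}$ and $1+(n-1)n^{-\tau a}$. The whole proof will be a careful bookkeeping of which of the two competing terms (the kernel contribution $1$ or the sum over positive eigenvalues) dominates in each regime.

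I would handle cases (1) and (2) jointly from the lower estimate of $Z$. Since $H$ is positive semidefinite, the energy term $\tau\Tr(H\varrho_H^\tau)$ is nonnegative, so $S(\varrho_H^\tau)\geq \log Z$. For $\tau<1/b$, the quantity $(n-1)n^{-\tau b}$ grows polynomially in $n$, hence dominates the leading $1$, and the estimate $\log\bigl((n-1)n^{-\tau b}\bigr)=(1-\tau b)\log n+o(1)$ delivers (1). At the borderline $\tau=1/b$, the two summands in $Z$ are of the same order and $Z\geq 1+(n-1)/n\to 2$, which yields $\log Z\to\log 2$ and therefore (2).

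For case (3) the lower bound on $S$ is trivial, so an upper bound is needed, and here the \emph{upper} estimate of $Z$ is crucial: since $\tau a>1$, we have $(n-1)n^{-\tau a}=O(n^{1-\tau a})=o(1)$, so $Z=1+o(1)$ and $\log Z=o(1)$. Writing the Gibbs eigenvalues as $p_0=1/Z$ and $p_i=\exp(-\tau\lambda_i)/Z$ for $i\geq 1$, I would decompose the Shannon entropy as
\begin{equation*}
S(\varrho_H^\tau)\;=\;-p_0\log p_0\;-\;\sum_{i=1}^{n-1}p_i\log p_i.
\end{equation*}
The first term equals $p_0\log Z=o(1)$. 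For the remaining sum, the lower bound $\lambda_i\geq a\log n$ keeps every $-\log p_i=\tau\lambda_i+\log Z$ of order $O(\log n)$, while the total mass off the kernel satisfies $\sum_{i\geq 1}p_i=1-p_0=(Z-1)/Z=O(n^{1-\tau a})$. Multiplying the two yields an $O(n^{1-\tau a}\log n)=o(1)$ contribution.

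I do not expect a serious obstacle; the argument is essentially a comparison of the two competing terms in $Z$ for different ranges of $\tau$. The one point requiring care is that in case (3) the entropy upper bound cannot be obtained from the free-energy identity (which only lower-bounds $S$), so one must return to the Shannon expression and exploit concentration of the Gibbs measure on the zero eigenvalue directly, which is exactly what the upper estimate of $Z$ enables.
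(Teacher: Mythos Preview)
Your argument is correct and, for parts (1) and (2), identical to the paper's: both drop the nonnegative energy term to get $S\geq\log Z$ and then use the lower estimate $Z\geq 1+(n-1)n^{-\tau b}$.

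For part (3) your Shannon-sum route is fine, but one remark is slightly off: the free-energy identity $S=\tau\Tr(H\varrho_H^\tau)+\log Z$ is an \emph{equality}, not merely a lower bound, and the paper obtains the upper bound in (3) directly from it. It shows $\log Z\leq\log(1+(n-1)n^{-\tau a})=o(1)$ (your same estimate) and, using that $x\,e^{-\tau x}$ is decreasing for $x>1/\tau$ together with $Z\geq 1$, bounds the energy term by $\tau(n-1)\lambda_{n-1}e^{-\tau\lambda_{n-1}}=O(n^{1-\tau a}\log n)=o(1)$. Your decomposition is the same computation in disguise, since
\[
-\sum_{i\geq 1}p_i\log p_i \;=\; \sum_{i\geq 1}p_i\bigl(\tau\lambda_i+\log Z\bigr) \;=\; \tau\Tr(H\varrho_H^\tau)+(1-p_0)\log Z,
\]
so the two arguments differ only in bookkeeping, not in substance. (Minor phrasing nit: it is the \emph{upper} bound $\lambda_i\leq b\log n$, not the lower bound $\lambda_i\geq a\log n$, that makes each $-\log p_i=O(\log n)$.)
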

For random \ER graphs the above theorem translates to the following remark.
\begin{remark}\label{remark:th7_for_ER}
Let $H_n$ be a Laplacian matrix of a random \ER graph for 
$p=p_0\frac{\log n}{n}$ with $p_0>1$.  Then 
\begin{enumerate}
\item if $\tau <W_{0}\left(\frac{1-p_0}{\ee p_0} 
\right)/(1-p_0)$, 
then a.a.s. $S(\varrho_{H_n})\geq C\log n +o(1)$
for some $C \in (0,1)$.
\item if $\tau > W_{-1}\left(\frac{1-p_0}{\ee p_0} \right)/(1-p_0)$, then 
a.a.s. 
$S(\varrho_{H_n})=o(1)$.
\end{enumerate}
\end{remark}

Theorem \ref{th:th7} and Remarks \ref{remark:erdos_renyi_dense}, 
\ref{remark:th7_for_ER} 
give an analytical justification for the effect presented
in~\cite{de2016spectral}. The authors pointed that the phase-transition occurs
with changing $\tau$. This phase transition is shown in
Figure~\ref{fig:ph-trans}, which shows the value of the entropy of the Gibbs
state for an \ER graph with a function of the dimension of the graph and
the parameter $\tau$. We show three values of the parameter $p_0$, namely
$p_0=10.5, \; 21, \; 42$. To make it easier to compare the values for changing
dimensionality, the value of the entropy is normalized by dividing by $\log n$.
The phase transition is clearly visible. We should also note that for
sufficiently large dimension $n$ the normalized entropy does not depend on the
dimension $n$ around $\tau < \frac{1}{b}$. Yet, it still depends on $\tau$ as
stated by Theorem~\ref{th:th7}. A more detailed view on this phenomenon is
presented in Figure~\ref{fig:phase-transition}. It depicts this phase transition
for the ER, WS and BA models and for all considered graph matrices. The model
specific parameters are stated in the legend.
\begin{figure}[!htp]
	\centering\includegraphics{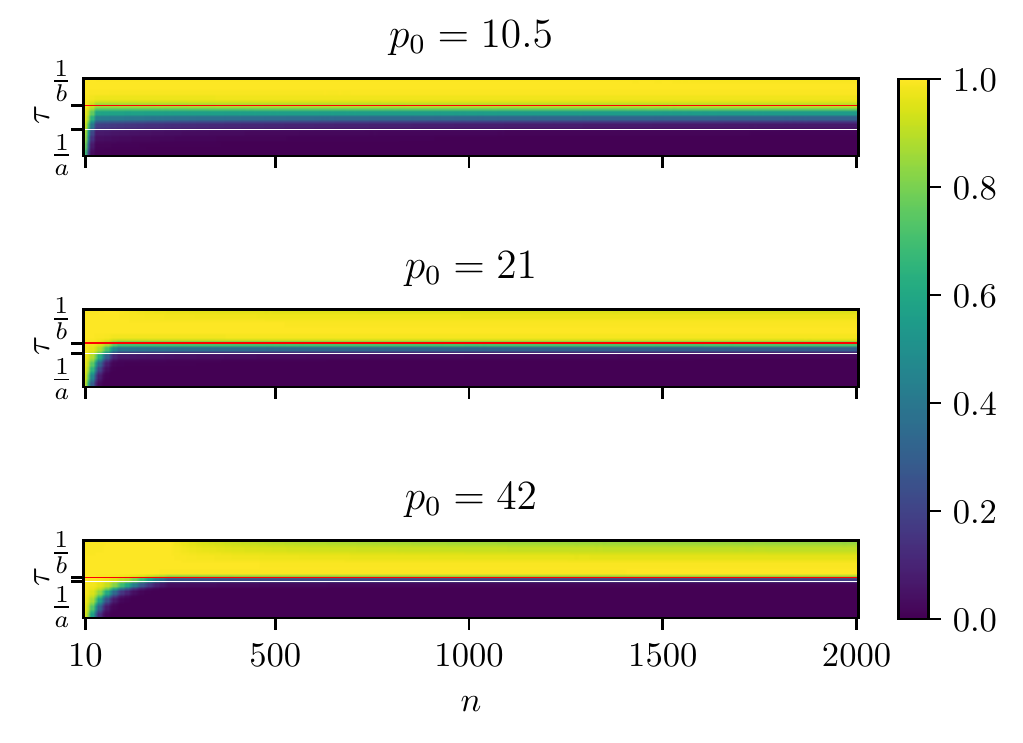}
	\caption{Entropy of the Gibbs state as a function of the parameter
	$\tau$ and the dimension of the graph, $n$ for the \ER model. The value of
	the entropy is normalized by dividing by $\log n$. The phase transition can
	be easily seen. We show results for three values of the parameter $p_0$.
	The horizontal lines mark the theoretical boundaries for $\tau$ found in
	Theorem~\ref{th:th7} and Remark~\ref{remark:th7_for_ER}. The red line marks $\tau=\frac{1}{b}$ while the white one corresponds to $\tau=\frac{1}{a}$.}
	\label{fig:ph-trans}
\end{figure}

Theorem~\ref{th:th7} not only confirms that there is a strong correlation
between spectral gap and the critical value of $\tau$ but also shows that the
transition depends on the order of the graph $n$. Further numerical
investigation shows that the entropy stabilizes with the graph order.

Let us now focus on the normalized Laplacian. It is known that normalized
Laplacian of random \ER graph satisfies  requirements of Theorem
\ref{theorem:normalized-laplacian} for $p\gg \log(n)/n$ \cite{chung2011spectra},
however, we can go beyond that. The assumption can be relaxed to
$pn=(1+\varepsilon)\log n$ for $\varepsilon>0$ by Corollary 1.2 from
\cite{kolokolnikov_algebraic_2014}. We conclude our results with the following
remark.
\begin{remark}
Assume $\mathcal L$ is a normalized Laplacian matrix of random 
\ER graph with $p\geq(1+\varepsilon)\log n/n$. The von Neumann 
entropy of Gibbs state 
satisfies
$S(\varrho_{\mathcal L}) - \log(n) 
\underset{\text{a.a.s.}}{\longrightarrow}0.$
\end{remark}

\subsection{Chung-Lu graphs}
By Theorem~4 from \cite{chung2011spectra}, normalized Laplacian of a random 
Chung-Lu graph  for which minimum expected degree 
$\omega_{\mathrm{min}} \gg\log n$ 
satisfies the requirement of Theorem~\ref{theorem:normalized-laplacian}.
Therefore we have the following remark.
\begin{remark}
Assume $\mathcal L$ is a normalized Laplacian matrix of a  Chung-Lu random 
graph for which minimum expected degree satisfies  
$\omega_{\mathrm{min}} \gg \log n$. The von Neumann entropy of 
Gibbs state 
satisfies
$S(\varrho_{\mathcal L}) - \log(n) 
\underset{\text{a.a.s.}}{\longrightarrow}0.$
\end{remark}

The following remark concerns the case of adjacency matrix of a Chung-Lu 
random graph. 
Its proof can be found in the Supplementary Materials in Section~3.5.%

\begin{remark}\label{remark:chung_lu_adjacency}
Let $A$ be an adjacency matrix of a random Chung-Lu graph with the maximum 
expected 
degree satisfying $\omega_{\mathrm{max}}> \frac{8}{9}\log(\sqrt{2}n)$
and 
$\tilde{d} := \frac{\sum \omega_i^2}{\sum \omega_i} \gg 
\omega_{\mathrm{max}}\sqrt{\log n} $.
Then  $S(\varrho_A) = o(1)$.
\end{remark}

\subsection{Numerical insight}\label{sec:numerical} In this section we will
complement the analytical results from previous sections by numerical studies on
various random graphs as well as some real-world graphs. %
Basing on the results in \cite{de2016spectral} we expect that the information
whether the graph describes real-world interactions may be distilled from the
location and shape of the phase-transition.

We can clearly observe that the entropy function in $\tau$ differs among \ER
graphs and \WS networks. Nevertheless, in the case of \ER and \BA graphs we
observe a similar shape of the plots with a single inflection point, however
there is a difference in location. Furthermore, in
Figure~\ref{fig:phase-transition} we also presented the shape of the curve for
smaller graphs. We can see that for all values of $p$, the location of phase
transition for \ER graphs goes to larger values of $\tau$, which is contrary to
\WS and \BA.

\begin{figure}[!htp]
\centering\includegraphics{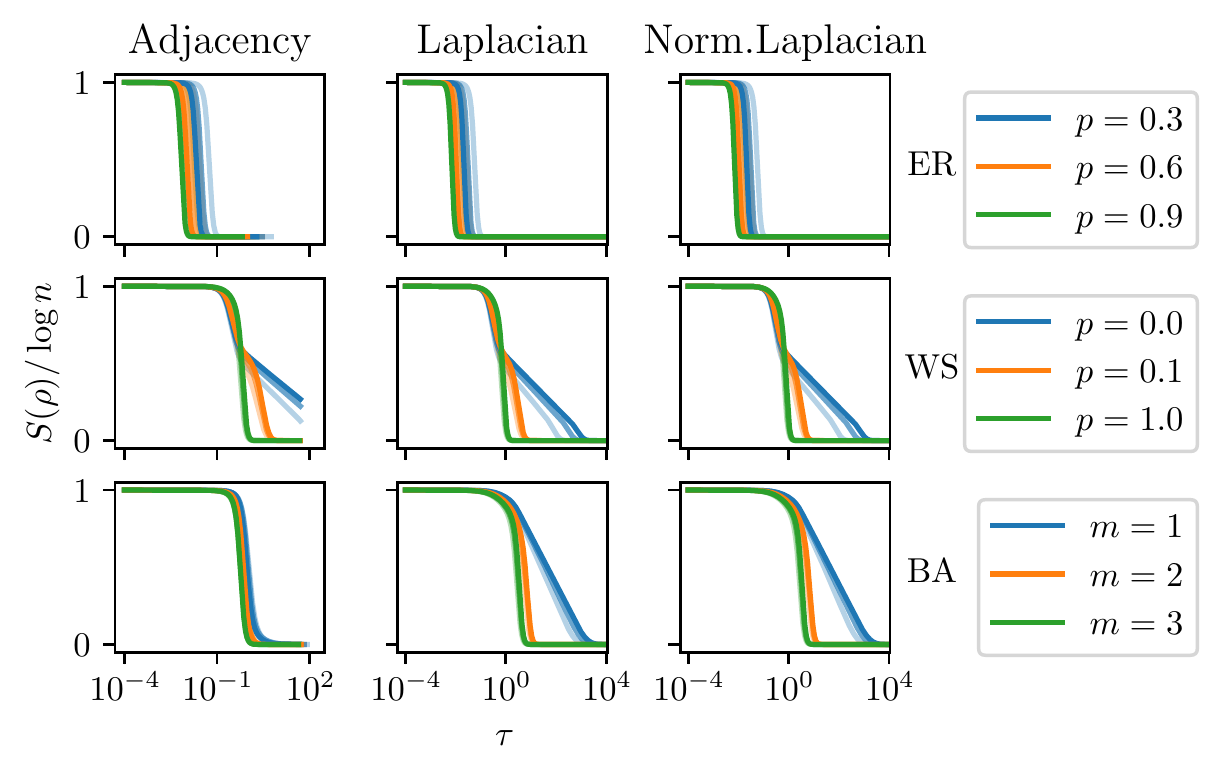} \caption{Illustration of the entropy's
phase transition for the ER, WS and BA models and all considered graph matrices.
The value of the entropy is normalized by dividing by $\log n$. The dimension is
$n=1200$. For WS we choose parameters $K=4$ and $\beta=0.6$. The specific model
parameters are stated in the corresponding legends. Each plot is obtained by
averaging 100 randomly sampled graphs. The transparency in the plot denotes the
size of the subgraph of the originally sampled random graph. In increasing 
order of opacity, these lines correspond to 33\%, 66\% and 100\% of the total 
number of nodes of the sampled graph chosen for 
calculations. Subgraphs were generated
by choosing a vertex with the highest degree, and  33\% and 66\% vertices
nearest to it.}\label{fig:phase-transition}
\end{figure}

We expect to observe similar situation for real-world graphs. More specifically,
we focused on co-authorship graphs (HEP-PH, HEP-TH, CA)
\cite{web_hepph,web_hepth,web_ca}, social networks (Facebook FB, Twitch TW)
\cite{web_fb,web_tw}, Gnutella graph (GT) \cite{web_gt} and as-caida (CAIDA)
\cite{web_caida} graphs. All the plots are presented in the Figure
\ref{fig:real_graphs}. Moreover, for the sake of comparison we considered \ER
graphs chosen so that the number of vertices was the same as in the
corresponding real-world graph and the expected number of edges equals the
number of edges of the real-world graph. Finally, we also calculated the entropy
of subgraphs of real graphs to analyze how the phase transition changes with the
graph size.

For some graphs we observe nontrivial changes in the pace of entropy change,
similarly to as it was in \WS graphs (see Figure \ref{fig:phase-transition}).
This is the most prominent in the case of Facebook for Laplacian and normalized
Laplacian, but for these matrices a similar effect can be observed also for
HEP-PH, HEP-TH and GT. It is worth noting that these pace changes occur
independently on the type of graph. More precisely, for co-authorship graphs the
pace changes are clearly visible for HEP-PH and HEP-TH, while they are not
visible for CA. This is even more appealing in the case of social network graph,
that is the pace changes are very clear for FB graph while they are not visible
for other graph.

\begin{figure}[!htp]
\centering\includegraphics{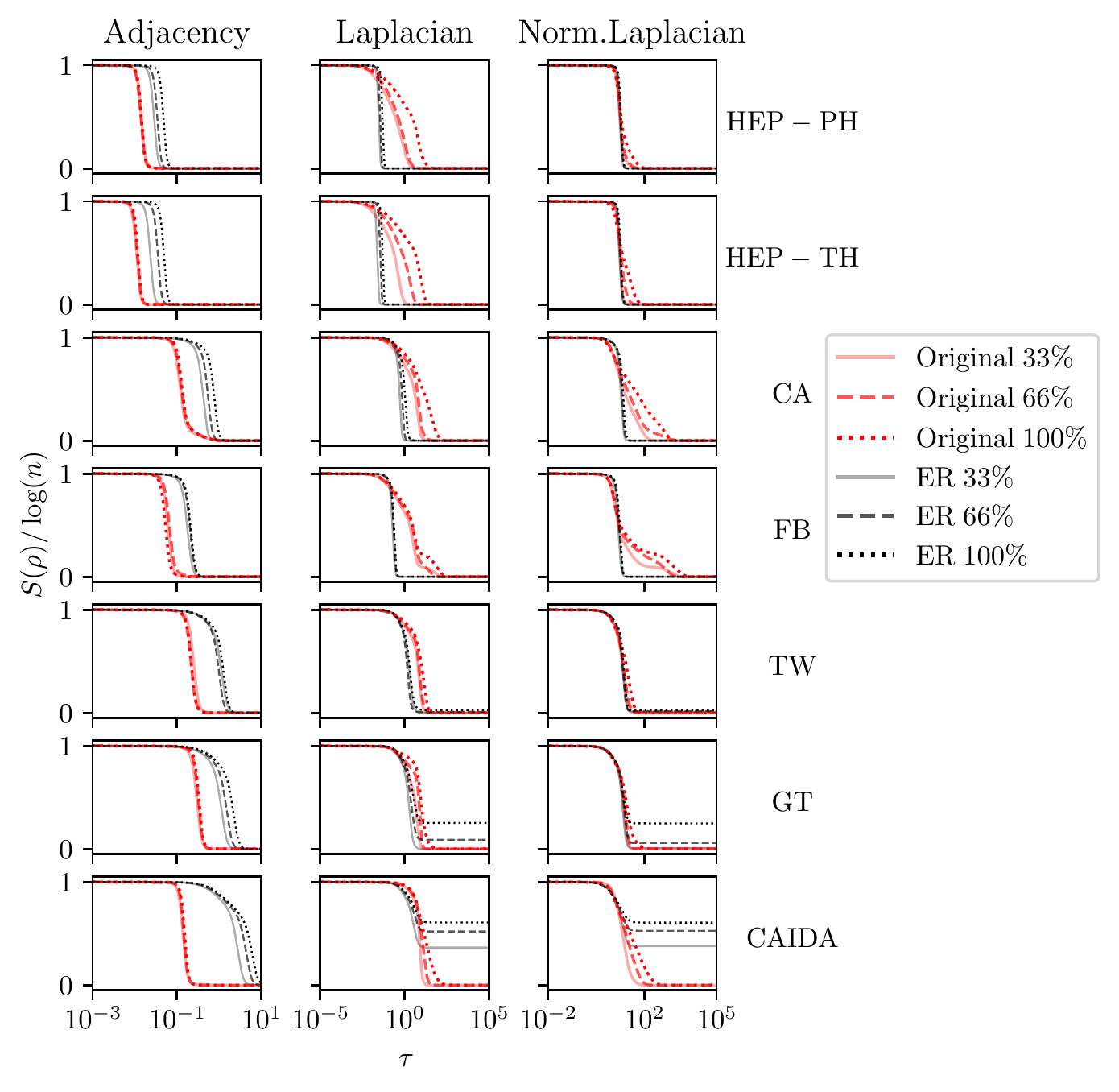} 

\caption{Illustration of the entropy's phase transition for real-world graphs.
	Each graph was turned into a simple undirected graph by replacing directed
	edges with undirected edges. Then, for each graph we chose the largest
	connected component. For each real-world graph, we generated ten \ER graphs
	with parameter $p=2 m/n^2$ where $n,m$ are the numbers of vertices and edges
	of the largest connected components of real-world graphs respectively.
	Finally, we took an average entropy. Subgraphs of real graphs were generated
	by choosing a vertex with the highest degree, and  33\% and 66\% vertices
	nearest to it.}\label{fig:real_graphs}
\end{figure}

For all real-world graphs for adjacency matrix, the phase transition occurs for
larger  values of $\tau$ than for the corresponding \ER graphs. Contrary to
adjacency matrix, for Laplacians and normalized Laplacians the phase transition
starts roughly at the same value of $\tau$ for both real-world and \ER graphs. 
On the other hand, phase transitions usually are more rapid for random graphs.
Different behavior can be observed for \ER graphs corresponding to GT and CAIDA
graphs. In those cases \ER graphs have many disconnected components and
therefore the limit as $\tau \rightarrow \infty$ is no longer zero.

Finally, there is almost no change in the shape and location of the phase transition of the entropy for
real graphs for adjacency matrix. In contrary, for corresponding \ER graphs we
observe that with the increasing number of nodes the location of the phase
transition moves to higher values of $\tau$. In the case of the Laplacian
matrix, we observe that the location of phase transition remains the same for
\ER graphs, while for real graphs it clearly goes to larger values of $\tau$.
Similar behavior is observed for the normalized Laplacian, however for some real
graphs (HEP-PH, HEP-TH) it is less evident compared to the Laplacian. The only
case for which the values of entropy was similar to the corresponding \ER graph
is the entropy of real graph TW for normalized Laplacian. Finally, the
non-trivial shape of the phase transition observed for FB can be found also for
subgraphs of FB, however for HEP-PH and HEP-TH it is observed only for the
original graph.

All the code used to obtain the results presented here is available on GitHub at
\url{https://github.com/iitis/graph-entropy}.

\section{Conclusions}\label{sec:conclusions} This work is focused on studying
the entropy of the Gibbs state for various graphs. We made the analysis for
three types of graph matrices: adjacency matrix, Laplacian and normalized
Laplacian for various graph classes. It turns out that the asymptotic properties
of the same graph may differ depending on which graph matrix is taken into
consideration.  We proved a few general theorems which assume only some 
constraints on matrix spectra.  Moreover, we studied several graph classes 
like complete graphs, bipartite graphs and cycle graphs, and derived the 
formulas for their entropy. It turned out that entropy usually takes the values 
either $\log n -o(1)$ or $o(1)$, which implies the shift of the phase 
transition.

We considered also various random graph models and real-world graphs. We focused
on the phase transition in $\tau$ of the entropy of \ER, \CL, \WS, \BA random
graphs with fixed graph order and some real-world graphs from various domains
like co-authorship and social networks. Analysis of real graphs shows that we can
indeed distill the information whether the graph represents some real-world
interactions. This can be distilled from the position and, in some
cases, the shape of the plot. The exact nature of this shift is dependent on
the chosen graph matrix, however for adjacency matrix and Laplacian the
difference were most evident.

\section*{Acknowledgments} AG has been partially supported by National Science
Center under grant agreement 2019/32/T/ST6/00158. AG would also like to
acknowledge START scholarship from the Foundation for Polish Science. AK and
{\L}P acknowledge the support of the Polish National Science Centre under the
project number 2016/22/E/ST6/00062.

\bibliographystyle{ieeetr}
\bibliography{random-graph-entropy}

\appendix
\section{Proof of properties of von Neumann entropy of the Gibbs
	state}\label{app:remark-properties}
Here we state the proof of claims made in Lemma~1.%
\begin{proof}
	\begin{equation}
	\begin{split}
	S(\varrho_{c H}^\tau)  
	&= -\tr \left (\frac{\exp(-\tau (cH))}{\tr\exp(-\tau (cH))} \log\left 
	(\frac{\exp(-\tau (cH))}{\tr\exp(-\tau (cH))}\right )\right ) \\
	&= -\tr \left (\frac{\exp(-(\tau c)H)}{\tr\exp(- (\tau c)H)} \log\left 
	(\frac{\exp(-(\tau c)H)}{\tr\exp(-(\tau c)H)}\right )\right ) 
	= S(\varrho_{H}^{c \tau}).   
	\end{split}
	\end{equation}
	\begin{equation}
	\begin{split}
	S(\varrho_{d\Id + H}^\tau)  
	&= -\tr \left (\frac{\exp(-\tau (d \Id + H))}{\tr\exp(-\tau (d \Id + 
		H))} \log\left (\frac{\exp(-\tau (d \Id + H))}{\tr\exp(-\tau (d \Id + 
		H))}\right )\right ) \\
	&= -\tr \left (\frac{\exp(-\tau d) \exp (H)}{\exp(- \tau d) \Tr 
		\exp(- \tau H)} 
	\log\left (\frac{\exp(-\tau d) \exp (H)}{\exp(- \tau d) \Tr 
		\exp(- \tau H)}\right )\right ) \\ 
	&= -\tr \left (\frac{\exp (H)}{\Tr \exp(- \tau H)} 
	\log\left (\frac{\exp (H)}{\Tr \exp(- \tau H)}\right )\right ) 
	= S(\varrho_{H}^\tau).   
	\end{split}
	\end{equation}
\end{proof}

\section{Entropy limits for $\tau \rightarrow 0$ and $\tau \rightarrow 
	\infty$.}\label{app:limits}
Assume we have a Hermitian matrix $M$.
\subsection{$\tau=0$}
\begin{equation}
\begin{split}
S(\varrho_{M}^0) & = -\tr \left(  \frac{\1}{n} \log \frac{\1}{n}\right) = \log n. 
\end{split}
\end{equation}

\subsection{$\tau\to\infty$}
Assume that 
$ \lambda_{1} \geq \lambda_{2} \geq \ldots \geq \lambda_{n-k} > 0$ and 
$\lambda_{n-k+1}, \ldots, \lambda_n = 0$ are eigenvalues of $M$. 
Defining $\exp_{i} \coloneqq \exp(-\tau \lambda_{i})$ and 
$\exp_{i,j} \coloneqq \exp(-\tau \left( \lambda_{i}-\lambda_{j}\right) )$
we have
\begin{equation}
\begin{split}
S(\varrho_{M}^\tau)  &= \tau \Tr (M \varrho^\tau_{M}) + \log Z \\
&=\frac{\tau \sum_{i=1}^n\lambda_i \exp(-\tau \lambda_i)
}{\sum_{i=1}^n\exp(-\tau \lambda_i)} +\log \left(\sum_{i=1}^n\exp(-\tau 
\lambda_i)\right) \\
&=\frac{\tau \lambda_1 \exp_1}{\sum_{i=1}^{n} \exp_i}
+ \ldots
+ \frac{\tau \lambda_{n-k} \exp_{n-k}}{\sum_{i=1}^{n} \exp_i}
+ \log \left(\sum_{i=1}^{n}\exp_i\right) \\
&= \frac{\tau \lambda_1 }{ 
	\sum_{i=1}^{n} \exp_{i,1}  }
+ \ldots 
+ \frac{\tau \lambda_{n-k} }{ 
	\sum_{i=1}^{n} \exp_{i,n-k}  } 
+ \log \left(k+ \sum_{i=1}^{n-k}\exp_i\right) \\
& \underset{\tau \rightarrow \infty}{\longrightarrow} \log k
\end{split}
\end{equation}
where the limit follows from observing that 
for the $j$-th factor the nominator grows like $\tau$, while in the 
denominator $\exp_{n,j}$ exponentially tends to infinity. More specifically
\begin{equation}
\lim\limits_{\tau \rightarrow \infty} \exp_{i,j} = 
\begin{cases}
0, \quad  i<j \\
1, \quad i=j   \\
\infty,  \quad  i>j.
\end{cases}
\end{equation}
Finally, it suffices to note that
$\lim\limits_{\tau \rightarrow \infty} \exp_{i} =0$  for every $i>0$.

Let us consider Laplacian and normalized Laplacian matrices. 
Since the number of 
eigenvalues equal to zero is equal to the number of connected components, then 
for $k=1$ we obtain the limit $\log(1)=0$. In the case of adjacency matrix, we 
can always shift the matrix by $\lambda_1(-A) \Id$ without the change of the 
entropy, see Lemma~1. 
Furthermore, for connected graphs there is a 
nonzero gap between the largest and second largest eigenvalue of the adjacency 
matrix which, similarly as in previous case, gives us $k=1$, and hence the 
limit is zero.

\section{Proofs of general theorems}

\subsection{Proof of Lemma~4 
}\label{app:lm_finite_spectrum_proof}
\begin{proof}
	The entropy takes the form
	\begin{equation}
	\begin{split}
	S(\varrho_H) 
	&= \tau \Tr (H \varrho^\tau_H) + \log Z \\
	&=\frac{\tau \sum_{i=1}^n\lambda_i \exp(-\tau \lambda_i)
	}{\sum_{i=1}^n\exp(-\tau \lambda_i)} +\log \left(\sum_{i=1}^n\exp(-\tau 
	\lambda_i)\right).
	\end{split}
	\end{equation}
	The numerator is a sum of eigenvalues mapped by $f(x) = \tau x \exp(-\tau x)$ 
	function. The function takes its unique maximum at $x=1/\tau$.
	
	Let us begin with the case when $c_1, c_2 \leq \frac{1}{\tau}$. Then
	\begin{equation}
	S(\varrho_H) 
	\geq \frac{n \tau c_2 \exp(-\tau c_2) }{n \exp(-\tau c_2)}
	+ \log \left( n \exp(-\tau c_1) \right)
	=\log n + \tau c_2 - \tau  c_1,
	\end{equation}
	and therefore
	\begin{equation}
	\log n - S(\varrho_H) \leq \tau (c_1 - c_2).
	\end{equation}
	If $c_1, c_2 \geq \frac{1}{\tau}$, then
	\begin{equation}
	\begin{split}
	S(\varrho_H) 
	&\geq \frac{n \tau c_1 \exp(-\tau c_1) }{n \exp(-\tau c_2)}
	+ \log \left( n \exp(-\tau c_1) \right) \\
	&=\tau c_1 \left( \exp \left( - \tau c_1 + \tau c_2\right) \right) + \log n - 
	\tau c_1 \\ 
	&= \log n + \tau c_1 \left( \exp \left(\tau (c_2 - c_1)\right) -1 \right),
	\end{split}
	\end{equation}
	and hence
	\begin{equation}
	\log n - S(\varrho_H) \leq \tau c_1 \left(1- \exp \left(\tau (c_2 - 
	c_1)\right) 
	\right).
	\end{equation}
	Assume finally that $c_2 \leq \frac{1}{\tau} \leq c_1$. In this case we have
	\begin{equation}
	\begin{split}
	S(\varrho_H) 
	&\geq \frac{n \tau \min \{ c_1 \exp(-\tau c_1), c_2 \exp(-\tau c_2) \}}{n 
		\exp(-\tau c_2)}
	+ \log \left( n \exp(-\tau c_1) \right) \\
	& = \tau \min \left\{ c_1 \frac{\exp (-\tau c_1)}{\exp (-\tau c_2)}, c_2 
	\frac{\exp (-\tau c_2)}{\exp (-\tau c_2)} \right\} + \log n - \tau c_1 \\
	& =\log n +  \tau \left(\min \{ c_1 \exp (\tau (c_2 - c_1)) , c_2 \}  - 
	c_1\right), 
	\end{split}
	\end{equation}
	and therefore
	\begin{equation}
	\log n- S(\varrho_H)   \leq  
	\tau \left(c_1- \min \{ c_1 \exp (\tau (c_2 - c_1)) , c_2 \}  \right).
	\end{equation}
\end{proof}

\subsection{Proof of Theorem~6 
}\label{app:th_normalized_laplacian_proof}
\begin{proof}
	The entropy takes the form 
	\begin{equation}
	\begin{split}
	S(\varrho_H) 
	&= \tau \Tr (H \varrho^\tau_H) + \log Z \\
	&=\frac{\tau \sum_{i=1}^n\lambda_i \exp(-\tau \lambda_i)
	}{\sum_{i=1}^n\exp(-\tau \lambda_i)} +\log \left(\sum_{i=1}^n\exp(-\tau 
	\lambda_i)\right).
	\end{split}
	\end{equation}
	Since the matrix $H$ is singular, we can extract a single zero eigenvalue. 
	Hence the first part of the sum can be bounded as
	\begin{equation}
	\frac{\tau (n-1) \lambda_{n-1} \exp(-\tau 
		\lambda_{1})}{1+(n-1)\exp(-\tau \lambda_{n-1})}
	\leq
	\tau \Tr (H \varrho^\tau_H)\leq 
	\frac{\tau (n-1) \lambda_1 \exp(-\tau 
		\lambda_{n-1})}{1+(n-1)\exp(-\tau \lambda_1)}
	\end{equation}
	Both bounds converge to $\tau c$ and hence $\tau \Tr (H \varrho^\tau_H)$ as 
	well converges to $\tau c$. 
	
	Similarly for $\log Z$ we have
	\begin{equation}
	\log(1+(n-1)\exp(-\tau\lambda_1))
	\leq	\log Z  \leq 
	\log(1+(n-1)\exp(-\tau\lambda_{n-1}))
	\end{equation}
	or equivalently
	\begin{equation}
	\log\left(\frac{1}{n}+\frac{n-1}{n}\exp(-\tau\lambda_1)\right)
	\leq \log Z -\log n\leq 
	\log\left(\frac{1}{n}+\frac{n-1}{n}\exp(-\tau\lambda_{n-1})\right)
	\end{equation}
	which implies $\log Z -\log n \to -\tau c$
	as $n\to\infty$, which finishes the proof.
\end{proof}

\subsection{Proof of Theorem~7 
}\label{app:th_laplacian_like_entropy_proof}

\begin{proof}
	The entropy takes the form 
	\begin{equation}
	\begin{split}
	S(\varrho_{H_n}) 
	&= \tau \Tr (H_n \varrho^\tau_{H_n}) + \log Z \\
	&=\frac{\tau \sum_{i=1}^n\lambda_i \exp(-\tau \lambda_i)
	}{\sum_{i=1}^n\exp(-\tau \lambda_i)} +\log \left(\sum_{i=1}^n\exp(-\tau 
	\lambda_i)\right).
	\end{split}
	\end{equation}
	Since $H_n$ matrix is 
	singular, we can extract a single zero eigenvalue.

	First we consider $\tau \Tr (H_n \varrho^\tau_{H_n})$. Since $x \exp(-x)$ is 
	a decreasing function for $x>1$ and since by assumption $\tau$ is constant 
	and $\lambda_{n-1}$ tends to infinity, we can bound
	\begin{equation}
	\begin{split}
	\tau \Tr (H_n \varrho^\tau_{H_n}) &\leq \frac{\tau (n-1)\lambda_{n-1} 
		\exp(-\tau 
		\lambda_{n-1})}{1+(n-1)\exp(-\tau \lambda_1)} \\
	&\leq  	\tau (n-1)\lambda_{n-1} \exp(-\tau \lambda_{n-1}).
	\end{split}
	\end{equation}
	Let $\lambda_{n-1}=\log(n)g(n)$, where $ g(n) \gg 1$.
	Then 
	\begin{equation}
	\tau (n-1) \lambda_{n-1} \exp(-\tau \lambda_{n-1}) 
	= \tau (n-1) \log (n) g(n) n^{-\tau g(n)}  
	\underset{n \rightarrow \infty}{\longrightarrow} 0.
	\end{equation}
	
	Now we bound
	\begin{equation}
	\begin{split}
	\log Z\leq  \sum_{i=1}^{n-1}\exp(-\tau\lambda_i) 
	\leq (n-1) \exp (-\tau \lambda_{n-1}).
	\end{split}
	\end{equation}
	If $\lambda_{n-1} \gg\log n$, then the formula above tends to 0. Since both 
	$\tau \Tr (H_n \varrho^\tau_{H_n})$ and $\log Z$ converge to zero we have the 
	result.
\end{proof}

\subsection{Proof of Theorem~10
}\label{app:th_7_proof}

\begin{proof}
	The entropy takes the form 
	\begin{equation}
	\begin{split}
	S(\varrho_{H_n}) 
	&= \tau \Tr (H_n \varrho^\tau_{H_n}) + \log Z \\
	&=\frac{\tau \sum_{i=1}^n\lambda_i \exp(-\tau \lambda_i)
	}{\sum_{i=1}^n\exp(-\tau \lambda_i)} +\log \left(\sum_{i=1}^n\exp(-\tau 
	\lambda_i)\right).
	\end{split}
	\end{equation}
	Since the matrix ${H_n}$
	is singular, we can extract single zero eigenvalue. 
	
	The $\log Z$ part can be bounded as 
	\begin{equation}
	\begin{split}
	\log Z  &\leq \log(1+(n-1)\exp(-\tau \lambda_{n-1})) \\
	&= \log(1-n^{-\tau a}+n^{1-\tau a}),
	\end{split}
	\end{equation}
	and
	\begin{equation}
	\begin{split}
	\log Z &\geq \log(1+(n-1)\exp(-\tau \lambda_1)) \\
	&= \log(1-n^{-\tau b}+n^{1-\tau b}).
	\end{split}
	\end{equation}
	Here behavior of $\log Z$ depends on $\tau$ parameter. If $\tau 
	<\frac{1}{b}$, then $\log Z \geq (1-\tau b)\log n +o(1)$ and 
	$\log Z \leq (1-\tau a)\log(n) +o(1)$. If $\tau >\frac{1}{a}$, 
	then $\log Z$ converges to 0.
	
	In the $\frac{1}{b}\leq \tau\leq\frac{1}{a}$ case we can provide partial 
	results only. 
	For $\tau=\frac{1}{b}$ we have 
	$\log Z \geq \log 2+o(1)$ and 
	$\log Z \leq(1-\frac{a}{b})\log n+o(1)$. 
	For $\tau =\frac{1}{a}$ we have
	$\log Z \leq \log 2+o(1)$. 
	For $\tau\in(\frac{1}{b},\frac{1}{a})$ we can only provide 
	$\log Z \leq (1-\tau a)\log n+o(1)$.
	
	Since $H_n$ is a nonnegative matrix, we have 
	$\tau \Tr (H_n \varrho^\tau_{H_n}) \geq0$. We 
	can again provide simple bounds
	\begin{equation}
	\begin{split}
	\tau \Tr (H_n \varrho^\tau_{H_n})&\leq \frac{\tau (n-1)\lambda_{n-1}\exp(-\tau 
		\lambda_{n-1})}{1+(n-1)\exp(-\tau \lambda_1)}\\
	& \leq \frac{\tau (n-1)a \log n\exp(-\tau a \log n)}{(n-1)\exp(-\tau b 
		\log n)} \\
	&  = \tau a n^{\tau(b-a)} \log n ,
	\end{split}
	\end{equation} 
	and similarly
	\begin{equation}
	\begin{split}
	\tau \Tr (H_n \varrho^\tau_{H_n}) &\geq \frac{\tau (n-1)\lambda_{1}\exp(-\tau 
		\lambda_{1})}{1+(n-1)\exp(-\tau \lambda_{n-1})}\\
	& \geq \frac{\tau (n-1)b \log n\exp(-\tau b \log n)}{n\exp(-\tau a \log 
		n)} \\
	& = \frac {n-1}{n} \tau b n^{\tau (a - b)}\log n.
	\end{split}
	\end{equation} 
	By combining the above inequalities we obtain the result.
\end{proof}

\subsection{Proof of Remark~14 
}\label{app:proof_rm_chung_lu_adjacency}
\begin{proof}
	Let $\lambda_n(-A) < 0$ be the single outlying eigenvalue of the matrix $-A$.
	By the use of Theorem 3 from \cite{chung2011spectra}  we have 
	the bound 
	\begin{equation}
	|\lambda_i(A)| \leq \sqrt{ 8 \omega_{\mathrm{max}} \log (\sqrt{2} n)}
	\end{equation}
	for $i = 1, \ldots , n-1$.
	From Lemma~1
	we note that 
	\begin{equation}
	S(\varrho_{A}) = S(\varrho_{-\lambda_n \1 +A})
	\end{equation}
	and therefore it suffices to consider the case of a shifted spectrum with 
	single zero eigenvalue and where for all the other eigenvalues we have 
	\begin{equation}
	\lambda_i(-\lambda_n \1 +A)  = \lambda_i(A) + \lambda_n(-A) 
	\geq \tilde{d} - 2\sqrt{ 8 \omega_{\mathrm{max}} \log (\sqrt{2} n)}.
	\end{equation}
	Using the assumption on $\tilde{d}$, asymptotically we obtain 
	$\lambda_i(-\lambda_n \1 +A) \gg\log n $ for 
	$i = 1, \ldots , n-1$.
	Then we use Theorem~7.%
\end{proof}

\section{Entropy of specific graph classes - 
	proofs}\label{app:spicific_graph_entropy_proofs}

The analytical spectra of all the graph classes discussed in this appendix are taken from~\cite{brouwer2011spectra}.

\subsection{Complete graph}
The Laplacian matrix of the complete graph has a single eigenvalue equal to zero
and $n-1$ eigenvalues equal to $n$. Therefore 
\begin{equation}
\begin{split}
S(\varrho_{L(K_n)}) 
&=\frac{\tau \sum_{i=1}^n\lambda_i \exp(-\tau \lambda_i)
}{\sum_{i=1}^n\exp(-\tau \lambda_i)} +\log \left(\sum_{i=1}^n\exp(-\tau 
\lambda_i)\right) \\
&= n  \tau \left( 1- \frac{1}{1+(n-1) \exp(-n \tau)} \right)
+ \log \left( 1+ (n-1) \exp(-n\tau)  \right)  \\
&= o(1).
\end{split}
\end{equation}
As the complete graph is a regular graph, then from
Proposition~3
we have $S(\varrho_{L(K_n)}) =
S(\varrho_{A(K_n)})$.
In the case of normalized Laplacian we use the fact that the complete graph is 
a $(n-1)$-regular graph. Therefore the spectrum of the normalized Laplacian 
consists of $n-1$ eigenvalues equal to $\frac{n}{n-1}$ and a single eigenvalue 
equal to $0$. Therefore we calculate
\begin{equation}
\begin{split}
S(\varrho_{\mathcal{L}(K_n)}) 
&= \tau \frac{n \exp\left( -\tau \frac{n}{n-1}\right)}{1+(n-1)\exp\left( -\tau 
	\frac{n}{n-1}\right)}
+ \log \left( 1+(n-1)\exp\left( -\tau \frac{n}{n-1}\right) \right) \\
&= \log n - o(1).
\end{split}
\end{equation}

\subsection{Complete bipartite graph}
Now we study entropy of the complete bipartite graph 
Let us set $|V|=n_1$ and $|W|=n_2$. The spectrum of the adjacency matrix of 
such a
complete bipartite graph $K_{n_1,n_2}$ consists of $n_1+n_2-2$ zero eigenvalues 
and 
$\pm\sqrt{n_1 n_2}$. Therefore we have
\begin{equation}
\begin{split}
S\left(\varrho_{A(K_{n_1,n_2})}\right) 
&= \tau \sqrt{n_1 n_2} \left( 1- \frac{2 \exp( \tau \sqrt{n_1n_2}) 
	+n_1+m_2-2}{\exp(- 
	\tau \sqrt{n_1n_2}) + \exp(\tau \sqrt{n_1n_2}) +n_1+n_2-2}   \right) \\
&+ \tau \sqrt{n_1n_2} + \log\left(1+ \exp(-2\tau\sqrt{n_1n_2}) 
+ \frac{n_1+n_2-2}{\exp(\tau \sqrt{n_1n_2})} \right) = o(1).
\end{split}
\end{equation}
The spectrum of Laplacian of the complete bipartite graph consists of a single 
$0$ eigenvalue, $n_1-1$ eigenvalues equal $n_2$, $n_2-1$ eigenvalues equal 
$n_1$ and a single $n_1+n_2$ eigenvalue. 
Now we assume $n_1=n_2$ and calculate
\begin{equation}
\begin{split}
S\left(\varrho_{L(K_{n_1,n_1})}\right) 
&=\tau n_1 \left(1- \frac{1-\exp(-2\tau n_1)}{1+2(n_1-1)\exp(-\tau n_1) + 
	\exp(-2 \tau n_1)} \right) \\
&+ \log\left( 1+2(n_1-1)\exp(-\tau n_1) + \exp(-2 \tau n_1) \right) =o(1).
\end{split}
\end{equation}
Assuming $n_2=1$ we obtain
\begin{equation}
\begin{split}
S\left(\varrho_{L(K_{n_1,1})}\right) 
&=\tau \left( 1-  \frac{1-n_1 \exp(-\tau(n_1+1))}{1+(n_1-1)\exp(-\tau) 
	+\exp(-\tau(n_1+1))}   \right) \\
&+\log \left(1+n_1\exp(-\tau) - \exp(-\tau)+\exp(-\tau(n_1+1)) \right)\\
&=\log (n_1+1) - o(1).
\end{split}
\end{equation}

Eigenvalues of a normalized Laplacian of a $K_{n_1,n_1}$ graph consist of 
single eigenvalues equal $0$ and $2$, and $2n_1-2$ eigenvalues equal $1$.
Therefore 
\begin{equation}
\begin{split}
S\left(\varrho_{\mathcal{L}(K_{n_1,n_1})}\right) 
&= \tau \left( 1-\frac{1-\exp(-2\tau)}{1+(2n_1-2)\exp(-\tau) + \exp(-2\tau)} 
\right) \\
&+\log \left( 1+ (2n_1-2)\exp(-\tau) + \exp(-2\tau)  \right) \\
&=\log(2n_1) - o(1).
\end{split}
\end{equation}

Eigenvalues of a normalized Laplacian of a star graph $K_{n_1,1}$ consist of a 
single $0$ eigenvalue, $n_1-1$ eigenvalues equal $1$ and a single eigenvalue 
equal $2$. 
Thus we have
\begin{equation}
\begin{split}
S\left(\varrho_{\mathcal{L}(K_{n_1,1})}\right) 
&= \tau \left( 1-\frac{1-\exp (-2 \tau)}{1+\exp(-2 \tau) + (n_1-1)\exp(-\tau)} 
\right) \\
&+ \log \left( 1+\exp(-2 \tau) + (n_1-1)\exp(-\tau) \right) \\
&= \log (n_1+1) - o(1).
\end{split}
\end{equation}

\subsection{Cycle graph}
Now we consider the cycle graph. We will prove Eq.~(12)
 from the main part of the article.
The eigenvalues of the adjacency matrix of the cycle $C_n$ take the form
$\lambda_j = 2\cos (\frac{2 \pi j}{n})$ for $j = 0, \ldots ,n-1$. 
Let $N_{\tau, n,j}:= \exp\left(- 2 \tau\cos \left( \frac{2 \pi j}{n} 
\right)\right)$. Then
\begin{equation}
\begin{split}
S(\varrho_{A(C_n)}) 
&=2\tau \frac{\sum_{j=0}^{n-1} \cos \left( \frac{2 \pi 
		j}{n} \right) N_{\tau, n,j}  }{\sum_{j=0}^{n-1}  
	N_{\tau, n,j}}
+ \log \left( \sum_{j=0}^{n-1}  N_{\tau, n,j} \right)  \\
&= 2\tau \frac{ \frac{1}{n}  \sum_{j=0}^{n-1} \cos \left( \frac{2 \pi 
		j}{n} \right) N_{\tau, n,j}  }{ \frac{1}{n} \sum_{j=0}^{n-1}  
	N_{\tau, n,j}}
+ \log \left( n \frac{1}{n} \sum_{j=0}^{n-1}  N_{\tau, n,j} \right) \\
&= 2\tau \frac{ \frac{1}{n}  \sum_{j=0}^{n-1} \cos \left( \frac{2 \pi 
		j}{n} \right) N_{\tau, n,j}  }{ \frac{1}{n} \sum_{j=0}^{n-1}  
	N_{\tau, n,j}}
+\log \left( \frac{1}{n} \sum_{j=0}^{n-1}  N_{\tau, n,j} \right) 
+ \log n.
\end{split}
\end{equation}
Now let us denote $x_j := \frac{j}{n}$. We calculate
\begin{equation}
\begin{split}
\frac{1}{n} \sum_{j=0}^{n-1}N_{\tau, n,j} 
&=\sum_{j=0}^{n-1} \frac{1}{n} \exp \left(-2 \tau \cos \left( 2 \pi x_j\right) 
\right)  \\
&\underset{n \to \infty}{\longrightarrow}
\int_0^1 \exp \left(-2 \tau \cos \left( 2 \pi x \right) \right) \dd x
= I_0 (2\tau),
\end{split}
\end{equation}
where $I_\alpha(x)$ is the modified Bessel function of the first kind.
Analogously we obtain
\begin{equation}
\begin{split}
\frac{1}{n} \sum_{j=0}^{n-1} \cos(2 \pi x_j) N_{\tau, n,j} 
&=\sum_{j=0}^{n-1}  \frac{1}{n} \cos(2 \pi x_j)  \exp \left(-2 \tau \cos \left( 
2 \pi x_j\right) \right) \\  
&\underset{n \to \infty}{\longrightarrow}
\int_0^1 \cos(2 \pi x)  \exp \left(-2 \tau \cos \left( 2 \pi x \right) \right) 
\dd x
= -I_1 (2\tau).
\end{split}
\end{equation}
Summing up, as
\begin{equation}
\begin{split}
&2\tau \frac{ \frac{1}{n}  \sum_{j=0}^{n-1} \cos \left( \frac{2 \pi 
		j}{n} \right) N_{\tau, n,j}  }{ \frac{1}{n} \sum_{j=0}^{n-1}  
	N_{\tau, n,j}}
+\log \left( \frac{1}{n} \sum_{j=0}^{n-1}  N_{\tau, n,j} \right) \\
&\underset{n \to \infty}{\longrightarrow}
2 \tau \frac{-I_1 (2\tau)}{I_0 (2 \tau)} +\log\left(I_0 (2\tau)\right),
\end{split}
\end{equation}
then for fixed $\tau$ we have
\begin{equation}
S(\varrho_{A(C_n)}) = \log n - 2 \tau \frac{I_1 (2\tau)}{I_0 (2 \tau)} 
+\log\left(I_0 (2\tau)\right) + o(1).
\end{equation}
As a cycle is a $2$-regular graph, then from Proposition~3 
we have that the same result will be obtained for the 
Laplacian matrix of a cycle.

To see why Eq. (13)
from the main part of the article holds we note that  
as a cycle is a $2$-regular graph, then  $\mathcal{L}(C_n) = \frac{1}{2}L(C_n)$.
Therefore it suffices to follow the proof of Eq. (12)
 from the main part of the article
knowing that the eigenvalues of the normalized Laplacian are 
$\lambda_j = 1-\cos  (\frac{2 \pi j}{n})$ for $j = 0, \ldots ,n-1$. 

%\bibliographystyle{ieeetr}
%\bibliography{random-graph-entropy}

\end{document}